\documentclass[12pt]{amsart}
\usepackage[colorlinks=true]{hyperref}
\usepackage[all]{xy}
\usepackage{amsmath,amssymb,amsthm}
\usepackage{txfonts}
\usepackage{enumerate}
\usepackage{graphicx}
\usepackage[figuresright]{rotating}
\setlength{\topmargin}{-5mm}
\setlength{\textwidth}{170mm}
\setlength{\evensidemargin}{-5mm}
\setlength{\oddsidemargin}{-5mm} 
\setlength{\textheight}{230mm}
\theoremstyle{plain}
	\newtheorem{thm}{Theorem}[section]
	\newtheorem{prp}[thm]{Proposition}
	\newtheorem{lem}[thm]{Lemma}
	\newtheorem{cor}[thm]{Corollary}
\theoremstyle{definition}
	\newtheorem{dfn}[thm]{Definition}
	\newtheorem{ex}[thm]{Example}
\theoremstyle{remark}
	\newtheorem{rem}[thm]{Remark}

%----sets of numbers---------

%----for homotopy theory---------

\newcommand{ \Fix}{\operatorname{Fix}}
\newcommand{ \Eq}{\operatorname{Eq}}
\newcommand{ \Hofix}{\operatorname{Hofix}}
\newcommand{ \Hoeq}{\operatorname{Hoeq}}

%----for category theory---------

\newcommand{ \im}{\operatorname{im}}
\newcommand{ \id}{\operatorname{id}}

\newcommand{ \ind}{\operatorname{ind}}

\newcommand{ \tr}{\operatorname{tr}}
\newcommand{ \Geom}{\operatorname{Geom}}

\begin{document}
\title{Coincidence Reidemeister trace and its generalization}
\author{Mitsunobu Tsutaya}
\address{Faculty of Mathematics, Kyushu University, Fukuoka 819-0395, Japan}
\email{tsutaya@math.kyushu-u.ac.jp}
\date{}
\subjclass[2010]{54H25 (primary), 55P50, 55T10 (secondary)}
\begin{abstract}
We give a homotopy invariant construction of the Reidemeister trace for the coincidence of two maps between closed manifolds of not necessarily the same dimensions.
It is realized as a homology class of the homotopy equalizer, which coincides with the Hurewicz image of Koschorke's stabilized bordism invariant.
To define it, we use a kind of shriek maps appearing string topology.
As an application, we compute the coincidence Reidemeister trace for the self-coincidence of the projections of $S^1$-bundles on $\mathbb{C}P^n$.
We also mention how to relate our construction to the string topology operation called the loop coproduct.
\end{abstract}
\maketitle

%%%%%%%%%%%%%%%%%%%
%%   Section 1   %%
%%%%%%%%%%%%%%%%%%%
\section{Introduction}

For a continuous self-map $f\colon X\to X$, a \textit{fixed point} is a point $x\in X$ such that $f(x)=x$.
Topological fixed point theory has been studied for more than 100 years.
One of the most remarkable result is the \textit{Lefschetz fixed point theorem}:

\textit{For a self map $f\colon M\to M$ on a compact $m$-dimensional manifold $M$, if the alternating sum of the traces
\[
	\lambda(f)=\sum_{i=0}^m(-1)^i\tr(H_\ast(f;\mathbb{Q}))
\]
is non-zero, then there exists a fixed point of $f$.}

This number is called the \textit{Lefschetz trace} or the \textit{Lefschetz number}.
Of course, it is a homotopy invariant of $f$.
Though the Lefschetz trace is easy to compute in many cases, it often vanishes even if $f$ has unremovable fixed points.
The \textit{Reidemeister trace} $\rho(f)$ is a refinement of $\lambda(f)$ given as an element of the free abelian group generated by the set of certain equivalence classes of paths on $M$, which is a homotopy invariant as well.
Under the obvious augmentation, $\rho(f)$ is mapped to $\lambda(f)$.

As a similar problem, let us consider the coincidence of two maps $f,g\colon M\to N$ between closed oriented manifolds.
That is, what can we say about the \textit{equalizer} (or the \textit{coincidence set})
\[
	\Eq(f,g):=\{x\in M\mid f(x)=g(x)\}?
\]
In fact, the Lefschetz-type coincidence theorem is known for $M$, $N$ of not necessarily the same dimensions.
See \cite{MR1853657}.
Then, how about the Reidemeister trace?
This is the main theme of the present paper.

Actually, we will define the \textit{coincidence Reidemeister trace} as follows.
Consider the homotopy pullback square
\[
\xymatrix{
	M \ar[d]_-{(f,g)}
		& \Hoeq(f,g) \ar[l]_-{\tilde{\Delta}} \ar[d] \\
	N\times N
		& N, \ar[l]^-{\Delta}
}
\]
where $\Delta$ is the diagonal map.
The space $\Hoeq(f,g)$ is called the \textit{homotopy equalizer}.
For such an diagram, using the construction appearing in string topology \cite{MR1942249}, we have the map of reverse direction on homology:
\[
	\tilde{\Delta}^!\colon H_\ast(M)\to H_{\ast-n}(\Hoeq(f,g)).
\]
Then, our coincidence Reidemeister trace $\rho(f,g)$ is defined to be the image of the fundamental class
\[
	\rho(f,g)=\tilde{\Delta}^![M]\in H_{m-n}(\Hoeq(f,g)).
\]
We will describe the geometric meaning of this invariant.

From a stable homotopy point of view, we have the Pontrjagin--Thom map between Thom spectra
\[
	R(f,g)\colon M^{-TM}\to\Hoeq(f,g)^{TN-TM},
\]
where $-TM$ denotes the stable normal bundle of $M$ and $\Hoeq(f,g)^{TN-TM}$ the Thom spectrum for the direct sum of the bundles pulled back by the canonical projections onto $M$ and $N$.
We call $R(f,g)$ the \textit{Reidemeister map} of $f$ and $g$.
Composing the unit map $S^0\to M^{-TM}$, we obtain the element of the stable homotopy group $\rho^{\pi}(f,g)\in\pi_{0}(\Hoeq(f,g)^{TN-TM})$, which we also call the coincidence Reidemeister trace.

In fact, this is not a completely new invariant.
For fixed point problem, it has already been known by Klein--Williams \cite[Section 10]{MR2326939} that the Reidemeister trace is obtained from the above procedure.
For coincidence problem, Koschorke \cite{MR2270573} defined the stabilized bordism invariant by a sum of \textit{local} Reidemeister traces in some sense, which coincides with the Reidemeister trace $\rho^\pi(f,g)$.
Note that the homotopy group $\pi_{0}(\Hoeq(f,g)^{TN-TM})$ is identified with the bordism group $\Omega_{m-n}(\Eq(f,g);TN-TM)$ of Hatcher--Quinn \cite{MR0353322}.
Ponto \cite[Section 4]{MR3463529} has already obtained the map $M_+\to\Hoeq(f,g)^{TN}$, which coincides with our Reidemeister map after taking the Thom spectrum of the stable normal bundle $-TM$.

The above map $\tilde{\Delta}$ is also known to induce the map on Serre spectral sequence by Cohen--Jones--Yan \cite{MR2039760}.
Then the Serre spectral sequence can be applied to compute the Reidemeister trace.
This is one of the advantage of our Reidemeister trace though the Koschorke's stabilized bordism invariant is stronger than it.
As an example, we study the self-coincidences of the projections of principal $S^1$-bundles over $\mathbb{C}P^n$.

This paper is organized as follows.
In Section \ref{fixed}, we recall the Lefschetz and Reidemeister traces for fixed points.
In Section \ref{geometric coincidence}, following the previous section, we recall the coincidence Lefschetz trace and define the coincidence Reidemeister trace geometrically.
In Section \ref{shriek maps}, we recall the shriek maps in the form used in string topology.
In Section \ref{invariant traces}, we define the local Reidemeister trace using the shriek map, and prove that it satisfies a variant of axioms of Reidemeister trace by Staecker \cite{MR2529499}.
As a corollary, our geometric Reidemeister trace coincides with the \textit{global} Reidemeister trace.
In Section \ref{properties}, we prove some more properties of the coincidence Reidemeister trace.
In Section \ref{Thom spectra}, we generalize our construction for related Thom spectra.
This clarifies the relations among the works by Koschorke, by Klein--Williams, and by Ponto.
In Section \ref{Nielsen numbers}, we state the relation among various generalizations of Nielesen number.
In Section \ref{shriek map on ss}, we recall the shriek map induced on the Serre spectral sequence.
In Section \ref{example1}, using the Serre spectral sequence, we compute the Reidemeister trace for the self-coincidence of the projections of $S^1$-bundles on $\mathbb{C}P^n$.
In Section \ref{loop coproduct}, we remark on the relation between our Reidemeister trace and the loop coproduct in string topology.

The author would like to thank Professor Ponto for letting him know the related works.

In the whole of this paper, we follow the sign convention in Dold's book \cite{MR1335915}.

%%%%%%%%%%%%%%%%%%%
%%   Section 2   %%
%%%%%%%%%%%%%%%%%%%
\section{Geometric description of Lefschetz and Reidemeister traces for fixed points}
\label{fixed}

We denote the integral homology by $H_\ast$.

First of all, we recall the fixed point index.
Let $\varphi\colon\mathbb{R}^m\to\mathbb{R}^m$ be a continuous map such that $f(x)=x$ if and only if $x=0$.
Then we have the map
\[
	\Phi\colon(\mathbb{R}^m,\mathbb{R}^m-0)\to(\mathbb{R}^m,\mathbb{R}^m-0),\qquad x\mapsto x-f(x)
\]
and hence the induced homomorphism
\[
	\Phi_\ast\colon H_m(\mathbb{R}^m,\mathbb{R}^m-0)\to H_m(\mathbb{R}^m,\mathbb{R}^m-0).
\]
The \textit{fixed point index} $\ind(\varphi;0)\in\mathbb{Z}$ is defined by
\[
	\Phi_\ast a=\ind(\varphi;0)a.
\]

Next, we recall the Lefschetz trace.
Let $f\colon M\to M$ be a self-map on a connected closed oriented manifold $M$.
Deforming $f$ if necessary, we assume that the set of fixed points
\[
	\Fix(f)=\{x\in M\mid f(x)=x\}
\]
is finite.
For $x\in\Fix(f)$, take a neighborhood $D_x$ and $D'_x$ of $x$ such that the following conditions are satisfied:
\begin{itemize}
\item
there are orientation-preserving homeomorphisms $(D_x,x)\cong(\mathbb{R}^m,0)$ and $(D_x',x)\cong(\mathbb{R}^m,0)$,
\item
$\Fix(f)\cap D_x=\{x\}$,
\item
$f(D_x)\cup D_x\subset D'_x$.
\end{itemize}
Then we have the map $\varphi_x\colon\mathbb{R}^m\to\mathbb{R}^m$ defined by the composite
\[
	\mathbb{R}^m\cong D_x\xrightarrow{f}D'_x\cong\mathbb{R}^m.
\]
Note that $\varphi_x(y)=y$ if and only if $y=0$.
\begin{dfn}
Under the above notation, the \textit{fixed point index} $\ind(f;x)$ of $f$ at $x$ is defined by
\[
	\ind(f;x)=\ind(\varphi_x;0).
\]
\end{dfn}
Note that $\ind(f;x)$ is independent of the choice of the neighborhoods $D_x,D'_x$ and the homeomorphisms $D_x\cong\mathbb{R}^m,D'_x\cong\mathbb{R}^m$. 
\begin{dfn}
The \textit{Lefschetz trace} $\lambda(f)\in\mathbb{Z}$ of $f$ is defined by
\[
	\lambda(f)=\sum_{x\in\Fix(f)}\ind(f;x).
\]
\end{dfn}

In fact, the Lefschetz trace is homotopy invariant by the following \textit{Lefschetz fixed point theorem}.
\begin{thm}[Lefschetz]
Let $f\colon M\to M$ be a self-map on a closed oriented connected manifold $M$.
Then the following equality holds:
\[
	\lambda(f)=\sum_{i=0}^m(-1)^i\tr(H_i(f;\mathbb{Q})).
\]
\end{thm}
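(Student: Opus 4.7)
The plan is to identify $\lambda(f)$ with a geometric intersection number in $M\times M$ and then evaluate that intersection algebraically via Poincar\'e duality. The graph $\Gamma=(\id_M,f)\colon M\to M\times M$ meets the diagonal $\Delta\colon M\to M\times M$ precisely at the fixed points of $f$, so under the genericity already assumed the intersection is a finite set of transverse points.

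\textbf{Step 1 (local index equals local intersection).} Near a fixed point $x$, I would work in the Euclidean chart used in the definition of $\ind(f;x)$. In these coordinates $(\Gamma-\Delta)(y)=(0,f(y)-y)$, so the local intersection multiplicity of $\Gamma$ with $\Delta$ at $x$ equals, up to a fixed sign determined by the orientation of $M\times M$, the degree of the map $y\mapsto y-f(y)$ that appears in the definition of $\ind(f;x)$. With Dold's conventions this sign is $+1$, so summing over fixed points yields $\lambda(f)=\Gamma\cdot\Delta$ in $H_0(M\times M)\cong\Z$.

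\textbf{Step 2 (algebraic evaluation).} Choose a homogeneous $\Q$-basis $\{e_i\}$ of $H^*(M;\Q)$ with Poincar\'e-dual basis $\{e^i\}$, characterized by $\langle e_i\cup e^j,[M]\rangle=\delta_{ij}$. By K\"unneth and Poincar\'e duality on $M\times M$, the diagonal class is
\[
[\Delta]=\sum_i(-1)^{|e_i|}e_i\otimes e^i\in H^m(M\times M;\Q),
\]
and therefore
\[
\Gamma\cdot\Delta=\langle\Gamma^*[\Delta],[M]\rangle=\sum_i(-1)^{|e_i|}\langle e_i\cup f^*e^i,[M]\rangle.
\]

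\textbf{Step 3 (trace identification).} Expanding $f^*e^i=\sum_j a^i_j\,e^j$ in the dual basis, the pairing $\langle e_i\cup f^*e^i,[M]\rangle$ picks out the diagonal entry $a^i_i$. Collecting terms by degree produces $\sum_k(-1)^k\tr(f^*|_{H^k(M;\Q)})$, which coincides with $\sum_{i=0}^m(-1)^i\tr(H_i(f;\Q))$ by the duality pairing between $f^*$ on cohomology and $H_\ast(f;\Q)$ on rational homology. The principal obstacle is sign bookkeeping: the orientation comparison in Step 1 and the Koszul-type signs in the formula for $[\Delta]$ in Step 2 must be consistent. Provided these are handled uniformly under the Dold conventions declared at the end of the introduction, the rest is a routine Poincar\'e-duality computation.
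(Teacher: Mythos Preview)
The paper does not actually prove this theorem: it is stated in Section~\ref{fixed} as background, attributed to Lefschetz, and recalled only to motivate the subsequent coincidence-theoretic constructions. So there is no ``paper's own proof'' to compare against.

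That said, your argument is the standard and correct one. The identification of $\lambda(f)$ with the intersection number $\Gamma\cdot\Delta$ in Step~1 is exactly the geometric content of the fixed point index, and your formula for the Poincar\'e dual of the diagonal in Step~2 is in fact the same one the paper quotes later (just before Corollary~\ref{cor_primary_obstruction}, citing Dold's Exercise~VIII.8.21). The trace identification in Step~3 is routine linear algebra. Your caveat about sign bookkeeping is well placed but not a gap: under Dold's conventions the signs do line up, and the argument goes through as written.
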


We also recall the Reidemeister trace.
\begin{dfn}
The \textit{homotopy fixed point} $\Hofix(f)$ of $f$ is defined by
\[
	\Hofix(f)=\{(x,\ell)\in M\times M^I\mid f(\ell(1))=\ell(0)=x\},
\]
where $M^I$ is the space of paths $I=[0,1]\to M$.
\end{dfn}
There is the canonical projection $\pi\colon\Hofix(f)\to M$ given by $\pi(x,\ell)=x$.
For $x\in\Fix(f)$, we denote the homotopy class of the constant path at $x$ by $[x]\in\pi_0(\Hofix(f))$.
\begin{dfn}
The \textit{Reidemeister trace} $\rho(f)\in\mathbb{Z}[\pi_0(\Hofix(f))]$ is defined by
\[
	\rho(f)=\sum_{x\in\Fix(f)}\ind(f;x)[x].
\]
\end{dfn}

The Reidemeiseter trace is known to be a homotopy invariant.
Note that the free abelian group $\mathbb{Z}[\pi_0(\Hofix(f))]$ is naturally isomorphic to $H_0(\Hofix(f))$.
The Reidemeister trace has better information than the Lefschetz trace because
\[
	\pi_\ast\rho(f)=\lambda(f)[\ast]\in H_0(M)
\]
where $[\ast]$ is the homology class represented by the basepoint $\ast\in M$.

The elements in $\pi_0(\Hofix(f))$ are called \textit{Reidemeister classes}.
For a fixed point class $a\in\pi_0(\Hofix(f))$, if the coefficient of $a$ in the Reidemeister trace $\rho(f)$ is nonzero, the fixed point class $a$ is said to be \textit{essential}.
Of course, the subset of essential fixed point classes is homotopy invariant.
The cardinality of this set is known as the classical \textit{Nielsen number}, which is of course a homotopy invariant.

%%%%%%%%%%%%%%%%%%%
%%   Section 3   %%
%%%%%%%%%%%%%%%%%%%
\section{Coincidence Lefschetz and Reidemeister traces of different dimensions}
\label{geometric coincidence}

In the rest of this paper, we will denote the homology of coefficients in a unital commutative ring $R$ by $H_\ast$.
We say an $m$-dimensional connected closed manifold $M$ is \textit{oriented} if $H_m(M)\cong R$ and the fundamental class $[M]\in H_m(M)$ is given.
Under this convention, every connected closed manifold is oriented with the unique fundamental class if $R=\mathbb{Z}/2\mathbb{Z}$.

Following the argument in the previous section, we recall the coincidence Lefschetz trace and introduce the coincidence Reidemeister trace.
Let $f,g\colon M\to N$ be maps between smooth connected closed oriented manifolds $M$ and $N$.
We denote $\dim M=m$ and $\dim N=n$ and allow the case when $m$ and $n$ are different.
But if $m<n$, one can deform $f$ and $g$ so that there are no $x\in M$ such that $f(x)=g(x)$.
Thus we assume $m\ge n$.
The \textit{equalizer} $\Eq(f,g)$ of $f$ and $g$ is defined by
\[
	\Eq(f,g)=\{x\in M\mid f(x)=g(x)\}.
\]
This subset is also called the \textit{coincidence set}.
We assume that $\Eq(f,g)\subset M$ is a smooth closed $(m-n)$-dimensional submanifold.
By applying the Thom transversality theorem, this is the case for generic deformations of $f$ and $g$.
We decompose $\Eq(f,g)$ as the disjoint union of the connected components:
\[
	\Eq(f,g)=L_1\sqcup\cdots\sqcup L_k.
\]
We may assume that $L_1,\ldots,L_{k_0}$ are orientable and the rest are not.
We fix the orientations of $L_1,\ldots,L_{k_0}$.
Take the tubular neighborhood $V_j\subset M$ of $L_j$, which is identified with the normal bundle of $L_j$.
Let $v_j\in H_{m-n}(V_j,V_j-L_j)$ be the corresponding Thom class, which is characterized by the property that under the composite
\[
	H_m(M)\to H_m(M,M-L_j)\cong H_m(V_j,V_j-L_j)\xrightarrow{v_j\frown}H_{m-n}(V_j)\cong H_{m-n}(L_j),
\]
the fundamental class $[M]$ is mapped to $[L_j]$.

We define the coincidence index of orientable $L_j$ as follows.
Take $x\in L_j$ and neighborhoods $D_x$ of $x$ and $D'_{f(x)}$ of $f(x)$ such that
\begin{enumerate}
\item
there are homeomorphisms $(D_x,D_x\cap L_j)\cong(\mathbb{R}^m,\mathbb{R}^{m-n}\times\{0\})$ and $(D'_{f(x)},f(x))\cong(\mathbb{R}^n,\{0\})$ which respect the orientations,
\item
$L_i\cap D_x=\emptyset$ for $i\ne j$,
\item
$f(D_x)\cup g(D_x)\subset D'_{f(x)}$.
\end{enumerate}
Then we have the maps $\varphi_x,\psi_x\colon\mathbb{R}^n\to\mathbb{R}^n$ defined by the composites
\begin{align*}
	\varphi_x\colon&\mathbb{R}^n\xrightarrow{(0,\id_{\mathbb{R}^n})}\mathbb{R}^m\cong D_x\xrightarrow{f}D'_x\cong\mathbb{R}^n,\\
	\psi_x\colon&\mathbb{R}^n\xrightarrow{(0,\id_{\mathbb{R}^n})}\mathbb{R}^m\cong D_x\xrightarrow{g}D'_x\cong\mathbb{R}^n.
\end{align*}
Note that $\varphi_x(y)=\psi_x(y)$ if and only if $y=0$.
We define the map
\[
	\Phi_x\colon(\mathbb{R}^n,\mathbb{R}^n-\{0\})\to(\mathbb{R}^n,\mathbb{R}^n-\{0\}),\qquad y\mapsto\psi_x(y)-\varphi_x(y).
\]
\begin{dfn}
Under the above notation, the \textit{coincidence index} $\ind(f,g;L_j)\in\mathbb{Z}$ of $f$ on $L_j$ is the integer such that the induced map
\[
	(\Phi_x)_\ast\colon H_n(\mathbb{R}^n,\mathbb{R}^n-\{0\})\to H_n(\mathbb{R}^n,\mathbb{R}^n-\{0\})
\]
satisfies $(\Phi_x)_\ast a=\ind(f,g;L_j)a$.
\end{dfn}
Note that $\ind(f,g;L_j)$ is independent of the choice of $x\in L_j$, the neighborhoods $D_x,D'_{f(x)}$ and the homeomorphisms $D_x\cong\mathbb{R}^m,D'_{f(x)}\cong\mathbb{R}^n$.
\begin{dfn}
The \textit{geometric coincidence Lefschetz trace} $\lambda^{\Geom}(f,g)\in H_{m-n}(N)$ of $f$ and $g$ is defined by
\[
	\lambda^{\Geom}(f,g)=\sum_{j=1}^{k_0}\ind(f,g;L_j)f_\ast[L_j].
\]
\end{dfn}

\begin{rem}
\label{rem_another_Lefschetz}
The above definition follows Saveliev \cite{MR1853657}.
The sum
\[
\sum_{j=1}^{k_0}\ind(f,g;L_j)[L_j]\in H_{m-n}(M)
\]
is another generalization of Lefschetz trace.
Of course, its image under the homomorphism $f_\ast$ or $g_\ast$ is $\lambda^{\Geom}(f,g)$.
Basic property of this invariant will be discussed in Corollaries \ref{cor_another_Lefschetz} and \ref{cor_primary_obstruction}.
\end{rem}

To define the coincidence Reidemeister trace, we recall the homotopy equalizer.
\begin{dfn}
The \textit{homotopy equalizer} $\Hoeq(f,g)$ of $f$ and $g$ is defined by
\[
	\Hoeq(f,g)=\{(x,\ell)\in M\times N^I\mid \ell(0)=f(x),\ell(1)=g(x)\}.
\]
We denote the evaluation of the path at $1/2$ by $\pi\colon\Hoeq(f,g)\to N$.
\end{dfn}
There is the canonical map
\[
	\iota\colon\Eq(f,g)\to\Hoeq(f,g),\qquad x\mapsto(x,(\text{the constant path at }f(x)=g(x))).
\]
Then we have the homology class $\iota_\ast[L_j]\in H_{m-n}(\Hoeq(f,g))$.
Using such classes, we define the coincidence Reidemeister trace as follows:
\begin{dfn}
The \textit{geometric coincidence Reidemeister trace} $\rho^{\Geom}(f,g)\in H_{m-n}(\Hoeq(f,g))$ is defined by
\[
	\rho^{\Geom}(f,g)=\sum_{j=1}^k\ind(f,g;L_j)\iota_\ast[L_j].
\]
\end{dfn}

We note that $\pi_\ast\rho^{\Geom}(f,g)=\lambda^{\Geom}(f,g)$.
In Section \ref{invariant traces}, we will construct $\rho(f,g)$ in a homotopy invariant way and prove that $\rho(f,g)=\rho^{\Geom}(f,g)$.

%%%%%%%%%%%%%%%%%%%
%%   Section 4   %%
%%%%%%%%%%%%%%%%%%%
\section{Shriek maps and Thom spectra}
\label{shriek maps}

In this section, we recall the classical Pontrjagin--Thom construction of manifolds and the generalization for homotopy pullback.
Most of the constructions we will give are also found in \cite{MR1942249,MR2326939}.

Let $\delta\colon X_1\to X_2$ be a smooth map between smooth connected closed oriented manifolds.
We denote the dimensions by $d_1=\dim X_1$ and $d_2=\dim X_2$ and the codimension by $d=d_2-d_1$ which is not necessarily nonnegative.
Consider a pullback square
\[
\xymatrix{
	E_1 \ar[r]^{\tilde{\delta}} \ar[d]_-{\pi_1}
		& E_2 \ar[d]^-{\pi_2} \\
	X_1 \ar[r]_-{\delta}
		& X_2,
}
\]
where $\pi_1$ and $\pi_2$ are fibrations.

For a sufficiently large $r$, there is an embedding $i\colon X_1\hookrightarrow D^r$ into the interior of the $r$-dimensional unit disk $D^r$.
Then the map
\[
	\delta'\colon X_1\xrightarrow{(i,\delta)}D^r\times X_2
\]
is an embedding.
Take a tubular neighborhood $U\subset(D^r-S^{r-1})\times X_2$ of $\delta'(X_1)$.
The following square is again a pullback square:
\[
\xymatrix{
	E_1 \ar[r]^-{\tilde{\delta}'} \ar[d]_-{\pi_1}
		& D^r\times E_2 \ar[d]^-{\id\times\pi_2} \\
	X_1 \ar[r]_-{\delta'}
		& D^r\times X_2,
}
\]
where $\tilde{\delta}':=(i\circ\pi_1,\tilde{\delta})$.
For the neighborhood $\tilde{U}:=(\id\times\pi_2)^{-1}U$ of $\tilde{\delta}'(E_1)$, the pair $(\tilde{U},\tilde{U}-\tilde{\delta}'(E_1))$ is homotopy equivalent to the pair $(\pi_1^{-1}\nu,\pi_1^{-1}\nu-E_1)$ by the homotopy lifting property of $\id\times\pi_2$, where $\nu$ is the normal bundle of $\delta'(X_1)$.
Note that the following construction on homology is independent of the choice of $r$, $i$ and $U$.
\begin{dfn}
\label{dfn_shriek}
The \textit{shriek maps}
\begin{align*}
	\delta^!&\colon H_i(X_2)\to H_{i-d}(X_1),\\
	\tilde{\delta}^!&\colon H_i(E_2)\to H_{i-d}(E_1)
\end{align*}
are defined by the composites
\begin{align*}
	\delta^!\colon &H_i(X_2)
		\cong H_{i+r}(D^r\times X_2,S^{r-1}\times X_2)\\
		&\to H_{i+r}(D^r\times X_2,D^r\times X_2-\delta'(X_1))
		\cong H_{i+r}(U,U-\delta'(X_1))
		\xrightarrow{u\frown}H_{i-d}(U)
		\cong H_{i-d}(X_1),\\
	\tilde{\delta}^!\colon &H_i(E_2)
		\cong H_{i+r}(D^r\times E_2, S^{r-1}\times E_2)\\
		&\to H_{i+r}(D^r\times E_2,D^r\times E_2-\tilde{\delta}'(E_1))
		\cong H_{i+r}(\tilde{U},\tilde{U}-\tilde{\delta}'(X_1))
		\xrightarrow{\pi_2^\ast u\frown}H_{i-d}(\tilde{U})
		\cong H_{i-d}(E_1),
\end{align*}
where $u\in H_{r+d}(U,U-\delta'(X_1))$ is the Thom class of the normal bundle of $\delta'(X_1)$.
\end{dfn}

\begin{rem}
The Thom class $u\in H_{i+r}(U,U-\delta'(X_1))$ is characterized by the following property: under the composite
\[
	H_{i+r}(D^r\times X_2,S^{r-1}\times X_2)\\
		\to H_{i+r}(D^r\times X_2,D^r\times X_2-\delta'(X_1))
		\cong H_{i+r}(U,U-\delta'(X_1))
		\xrightarrow{u\frown}H_{i-d}(U)
		\cong H_{i-d}(X_1),
\]
the cross product $g_r\times[X_2]$ is mapped to the fundamental class $[X_1]$, where $g_r\in H_r(D^r,S^{r-1})$ is the generator respecting the orientation.
\end{rem}

The following proposition immediately follows from the construction.
\begin{prp}
\label{prp_shriek_property1}
\begin{enumerate}
\item
The shriek map $\delta^!$ maps the fundamental class of $X_2$ to that of $X_1$, that is, $\delta^![X_2]=[X_1]$.
\item
The following diagram commutes:
\[
\xymatrix{
	H_i(E_2) \ar[r]^-{\tilde{\delta}^!} \ar[d]_-{(\pi_2)_\ast}
		& H_{i-d}(E_1) \ar[d]^-{(\pi_1)_\ast} \\
	H_i(X_2) \ar[r]_-{\delta^!}
		& H_{i-d}(X_1).	
}
\]
\end{enumerate}
\end{prp}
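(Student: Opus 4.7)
My plan is direct. Part (1) is essentially an unpacking of the Thom-class characterization recalled in the preceding remark. Under the suspension isomorphism $H_n(X_2)\cong H_{n+r}(D^r\times X_2,S^{r-1}\times X_2)$ the fundamental class $[X_2]$ is sent to $g_r\times[X_2]$, and the normalization of $u$ is precisely that the remaining composite in the definition of $\delta^!$ carries $g_r\times[X_2]$ to $[X_1]$. So (1) requires no further argument.

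For part (2), my plan is to follow the projection $\id\times\pi_2\colon D^r\times E_2\to D^r\times X_2$ through each stage of the definition of $\tilde\delta^!$ and match it to the corresponding stage in $\delta^!$. The suspension isomorphism, the maps induced by inclusion of pairs, and the excision isomorphism are all manifestly natural with respect to a continuous map of pairs, so commutativity at those three stages is immediate. The remaining step is the cap product with $\pi_2^*u$ versus $u$, for which naturality of the cap product yields
\[
(\id\times\pi_2)_*(\pi_2^*u\frown\alpha)=u\frown(\id\times\pi_2)_*\alpha,
\]
so this stage commutes as well, provided the final identifications $\tilde U\simeq E_1$ and $U\simeq X_1$ are chosen compatibly.

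The subtle point, and the step I expect to be the main obstacle, is constructing \emph{compatible} deformation retractions of $\tilde U$ onto the copy of $E_1$ inside it and of $U$ onto $\delta'(X_1)\cong X_1$. This is exactly where the assumption that $\pi_2$ is a fibration enters: the homotopy lifting property allows one to lift the linear retraction of the tubular neighborhood $U$ onto $\delta'(X_1)$ to a retraction of $\tilde U=(\id\times\pi_2)^{-1}U$ onto its preimage over $\delta'(X_1)$, which is a copy of $E_1$. With such lifts in place $\id\times\pi_2$ restricts to $\pi_1$ under the equivalences $\tilde U\simeq E_1$ and $U\simeq X_1$, and piecing the four stages together gives the commutative square in (2).
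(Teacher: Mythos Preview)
Your proposal is correct and matches the paper's treatment: the paper states that this proposition ``immediately follows from the construction'' and gives no further argument. Your write-up is simply a careful unpacking of that claim---using the Thom-class normalization in the preceding remark for (1), and naturality of each stage of the composite (suspension, inclusion of pairs, excision, cap product, and the fibration-lifted retraction $\tilde U\simeq E_1$ already noted in the construction) for (2).
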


Let us consider the homotopy pullback $E$ of the maps
\[
	X_1\xrightarrow{\delta_1}X_3\xleftarrow{\delta_2}X_2
\]
of smooth connected closed oriented manifolds.
We denote the dimensions by $d_i=\dim X_i$.
Let $\epsilon_i\colon X_3^I\to X_3$ the evaluation at $i$, i.e., $\epsilon_i(\ell)=\ell(i)$.
Consider the following pullback squares:
\[
\xymatrix{
	E \ar[r]^-{\tilde{\delta}_2} \ar[d]^-{\tilde{\delta}_1}
		& \tilde{X}_1 \ar[r]^-{\epsilon_0} \ar[d]^-{\tilde{\delta}_1}
		& X_1 \ar[d]^-{\delta_1} \\
	\tilde{X}_2 \ar[r]^-{\tilde{\delta}_2} \ar[d]^-{\epsilon_1}
		& X_3^I \ar[r]^-{\epsilon_0} \ar[d]^-{\epsilon_1}
		& X_3 \\
	X_2 \ar[r]^-{\delta_2}
		& X_3.
}
\]
These squares are homotopy pullback as well.
We denote the obvious inclusions by $i_1\colon X_1\to\tilde{X}_1$ and $i_2\colon X_2\to\tilde{X}_2$.

\begin{prp}
\label{prp_pullback_shriek}
The following square commutes up to the sign $(-1)^{(d_3-d_1)(d_3-d_2)}$:
\[
\xymatrix{
	H_i(X_3) \ar[r]^-{\delta_1^!} \ar[d]_-{\delta_2^!}
		& H_{i-d_3+d_2}(X_1) \ar[d]^-{\tilde{\delta}_2^!\circ(i_1)_\ast} \\
	H_{i-d_3+d_1}(X_2) \ar[r]_-{\tilde{\delta}_1^!\circ(i_2)_\ast}
		& H_{i-2d_3+d_1+d_2}(E).
}
\]
\end{prp}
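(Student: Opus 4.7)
The plan is to express both compositions as a single ``double'' Pontrjagin--Thom collapse in a common ambient space, so that their discrepancy reduces to the graded commutativity of two Thom classes. Fix smooth embeddings $i_k\colon X_k\hookrightarrow D^{r_k}$ for $k=1,2$; since the shriek map is independent of the choice of $r_k$, we may take both $r_k$ even. As in Definition~\ref{dfn_shriek}, we obtain embeddings $\delta_k'=(i_k,\delta_k)\colon X_k\hookrightarrow D^{r_k}\times X_3$ with tubular neighborhoods $U_k$ and Thom classes $u_k\in H^{r_k+d_3-d_k}(U_k,U_k-\delta_k'(X_k))$. Pulling the embedding $\delta_1'$ back along the fibration $\epsilon_0$ lifts $\tilde{X}_1$ into $D^{r_1}\times X_3^I$, and likewise $\epsilon_1$ lifts $\tilde{X}_2$ into $D^{r_2}\times X_3^I$. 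Taking the (homotopy) intersection then embeds $E$ into the common ambient $D^{r_1}\times D^{r_2}\times X_3^I$ with tubular data $\widetilde{U}_1\cap\widetilde{U}_2$ and ``product'' Thom class $\tilde{u}_1\cup\tilde{u}_2$.

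Unwinding Definition~\ref{dfn_shriek} step by step and using associativity of cap product together with the compatibility of the cap product with the cross product and with Thom-class pull-back, the two compositions applied to $\alpha\in H_i(X_3)$ become, respectively,
\[
(\tilde{u}_1\cup\tilde{u}_2)\frown(g_{r_1}\times g_{r_2}\times\alpha)
\quad\text{and}\quad
(\tilde{u}_2\cup\tilde{u}_1)\frown(g_{r_2}\times g_{r_1}\times\alpha),
\]
pushed into $H_\ast(E)$. Graded commutativity of cup product in Dold's convention gives $\tilde{u}_1\cup\tilde{u}_2=(-1)^{|u_1||u_2|}\tilde{u}_2\cup\tilde{u}_1$ with $|u_k|=r_k+d_3-d_k$, while swapping the disk factors introduces $(-1)^{r_1r_2}$. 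The total sign is
\[
(-1)^{r_1r_2+(r_1+d_3-d_1)(r_2+d_3-d_2)}=(-1)^{r_1(d_3-d_2)+r_2(d_3-d_1)+(d_3-d_1)(d_3-d_2)},
\]
which collapses to the asserted $(-1)^{(d_3-d_1)(d_3-d_2)}$ by our even choice of $r_1$ and $r_2$.

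The main obstacle is precise sign bookkeeping. Dold's conventions for cup and cap products, the sign of the swap $g_{r_2}\times g_{r_1}=(-1)^{r_1r_2}g_{r_1}\times g_{r_2}$, and the verification that pulling Thom classes back along the evaluation fibrations $\epsilon_0,\epsilon_1$ introduces no additional sign all demand careful tracking. The underlying geometry---that both compositions arise from a single double Pontrjagin--Thom collapse on $D^{r_1}\times D^{r_2}\times X_3^I$---is routine once the combined embedding and its tubular neighborhood are in place.
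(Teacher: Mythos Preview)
Your approach is essentially the same as the paper's: both embed $X_1,X_2$ into disks, form the combined ambient $D^{r_1}\times D^{r_2}\times X_3^I$ (the paper writes it as $D^{r_1}\times X_3^I\times D^{r_2}$), and recognize that each composite is a two-step Pontrjagin--Thom collapse whose discrepancy comes from the order in which the two Thom classes are capped. The paper carries this out by writing down four large sign-decorated diagrams that track every cross product, excision, and cap, with signs $(-1)^{ir_2}$, $(-1)^{d_3r_2}$, $(-1)^{r_1(c_2+r_2)}$, $(-1)^{(c_1+r_1)(c_2+r_2)}$, $(-1)^{(i-c_1)r_2}$ appearing along the way and cancelling to $(-1)^{c_1c_2}$; your trick of taking $r_1,r_2$ even is a genuine shortcut the paper does not use, collapsing that bookkeeping to the single line you wrote.

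That said, the step you label ``unwinding Definition~\ref{dfn_shriek}'' is exactly where the work lies, and your proposal stops short of doing it. The assertion that the two composites literally become $(\tilde u_1\cup\tilde u_2)\frown(g_{r_1}\times g_{r_2}\times\alpha)$ and its swap hides several intermediate signs: moving $g_{r_2}\times(-)$ past a cap with $\tilde u_1$, identifying the pulled-back Thom classes in the intersected tubular neighborhood $\tilde U_{12}$, and checking that the excision isomorphisms introduce no stray sign. These are precisely the squares the paper draws out. With $r_1,r_2$ even most of those intermediate signs vanish, so your shortcut does make the verification substantially lighter---but you should still exhibit at least the commuting-cap step $(j_1^\ast\tilde u_1)\frown(j_2^\ast\tilde u_2\frown-)=(-1)^{|u_1||u_2|}(j_2^\ast\tilde u_2)\frown(j_1^\ast\tilde u_1\frown-)$ on $\tilde U_{12}$ and the identification of each composite with one side of it, rather than asserting the outcome.
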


\begin{proof}
Take embeddings $X_1\hookrightarrow D^{r_1}$ and $X_2\hookrightarrow D^{r_2}$ as before.
Let $U_i\subset D^{r_i}\times X_3$ be a tubular neighborhood of $X_i$ and $\tilde{U}_i\subset D^{r_i}\times X_3^I$ its inverse image by $\id\times\epsilon_i$.
We have the pullback squares
\[
\xymatrix{
	E \ar[r]^-{\tilde{\delta}'_2} \ar[d]^-{\tilde{\delta}'_1}
		& \tilde{X}_1\times D^{r_2} \ar[r]^-{\epsilon_0\times\id} \ar[d]^-{\tilde{\delta}'_1}
		& X_1\times D^{r_2} \ar[d]^-{\delta'_1} \\
	D^{r_1}\times\tilde{X}_2 \ar[r]^-{\tilde{\delta}'_2} \ar[d]^-{\id\times\epsilon_1}
		& D^{r_1}\times X_3^I\times D^{r_2} \ar[r]^-{\id\times\epsilon_0\times\id} \ar[d]^-{\id\times\epsilon_1\times\id}
		& D^{r_1}\times X_3\times D^{r_2} \\
	D^{r_1}\times X_2 \ar[r]^-{\delta'_2}
		& D^{r_1}\times X_3\times D^{r_2}.
}
\]
Interchanging the factors as $D^{r_2}\times X_3\cong X_3\times D^{r_2}$ and $D^{r_2}\times X_3^I\cong X_3^I\times D^{r_2}$, we denote the corresponding tubular neighborhoods by $U_2'$ and $\tilde{U}_2'$, respectively.
We also have the following tubular neighborhoods of $E$:
\begin{itemize}
\item
$D^{r_1}\times\tilde{X}_2\cap\tilde{U}_1\times D^{r_2}$ is a tubular neighborhood of $E\subset D^{r_1}\times\tilde{X}_2$,
\item
$\tilde{X}_1\times D^{r_2}\cap D^{r_1}\times\tilde{U}_2'$ is a tubular neighborhood of $E\subset\tilde{X}_1\times D^{r_2}$,
\item
$\tilde{U}_{12}=\tilde{U}_1\times D^{r_1}\cap D^{r_1}\times\tilde{U}_2'$ is a tubular neighborhood of $E\subset D^{r_1}\times X_3^I\times D^{r_2}$, which is naturally homotopy equivalent to the Whitney sum of the restriction of the normal bundles of $\tilde{X}_1\times D^{r_2}$ and of $D^{r_1}\times\tilde{X}_2$.
\end{itemize}
We denote the Thom classes by
\begin{align*}
	&u_i\in H^{r_i+c_i}(U_i,U_i-X_i),&&u_2'\in H^{r_2+c_2}(U_2',U_2'-X_2),\\
	&\tilde{u}_i\in H^{r_i+c_i}(U_i,U_i-X_i),&&\tilde{u}_2'\in H^{r_2+c_2}(U_2',U_2'-X_2),
\end{align*}
where we denote the codimension by $c_i=d_3-d_i$.
Note that, under the homeomorphisms $(U_2,U_2-X_2)\cong(U_2',U_2'-X_2)$ and $(\tilde{U}_2,\tilde{U}_2-X_2)\cong(\tilde{U}_2',\tilde{U}_2'-\tilde{X}_2)$, the Thom classes $u_2$ and $\tilde{u}_2$ correspond to $(-1)^{d_3r_2}u_2'$ and $(-1)^{d_3r_2}\tilde{u}_2'$, respectively. 

Then we have the following commutative diagrams, where $j_1\colon\tilde{U}_{12}\to\tilde{U}_1\times D^{r_2}$ and $j_2\colon\tilde{U}_{12}\to D^{r_2}\times\tilde{U}_2'$ are inclusions, $\mathcal{D}^{r_i}=(D^{r_i},S^{r_i-1})$ and the signs inside squares mean that the corresponding squares commute up to the indicated signs:
\[
\xymatrix{
	H_i(X_3^I) \ar@{=}[r] \ar[d]^-{g_{r_2}\times} \ar@{}[dr]|{(-1)^{ir_2}}
		& H_i(X_3^I) \ar[d]^-{\times g_{r_2}} \\
	H_{i+r_2}((D^{r_2},S^{r_2-1})\times X_3^I) \ar[r]^-{\cong} \ar[d]
		& H_{i+r_2}(X_3^I\times(D^{r_2},S^{r_2-1})) \ar[d] \\
	H_{i+r_2}(D^{r_2}\times X_3^I,D^{r_2}\times X_3^I-\tilde{X}_2) \ar[r]^-{\cong}
		& H_{i+r_2}(X_3^I\times D^{r_2},X_3^I\times D^{r_2}-\tilde{X}_2) \\
	H_{i+r_2}(\tilde{U}_2,\tilde{U}_2-\tilde{X}_2) \ar[r]^-{\cong} \ar[u]_-{\cong} \ar[d]^-{\tilde{u}_2\frown} \ar@{}[dr]|{(-1)^{d_3r_2}}
		& H_{i+r_2}(\tilde{U}_2',\tilde{U}_2'-\tilde{X}_2) \ar[u]_-{\cong} \ar[d]^-{\tilde{u}_2'\frown} \\
	H_{i-c_2}(\tilde{U}_2) \ar[r]^-{\cong}
		& H_{i-c_2}(\tilde{U}_2') \\
	H_{i-c_2}(\tilde{X}_2) \ar@{=}[r] \ar[u]_-{\cong}
		&H_{i-c_2}(\tilde{X}_2), \ar[u]_-{\cong}
}
\]
\begin{turn}{90}
\centering
\begin{tabular}{c} 
$\displaystyle{
\xymatrix{
	H_i(X_3^I) \ar[r]^-{g_{r_1}\times}
			\ar[d]^-{\times g_{r_2}}
		& H_{i+r_1}(\mathcal{D}^{r_1}\times X_3^I)
			\ar[r] \ar[d]^-{\times g_{r_2}}
		& H_{i+r_1}(D^{r_1}\times X_3^I,D^{r_1}\times X_3^I-\tilde{X}_1)
			\ar[d]^-{\times g_{r_2}} \\
	H_{i+r_2}(X_3^I\times\mathcal{D}^{r_2})
			\ar[r]^-{g_{r_1}\times} \ar[d]
		& H_{i+r_1+r_2}(\mathcal{D}^{r_1}\times X_3^I\times\mathcal{D}^{r_2})
			\ar[r] \ar[d]
		& H_{i+r_1+r_2}((D^{r_1}\times X_3^I,D^{r_1}\times X_3^I-\tilde{X}_1)\times\mathcal{D}^{r_2})
			\ar[d] \\
	H_{i+r_2}(X_3^I\times D^{r_2},X_3^I\times D^{r_2}-\tilde{X}_2)
			\ar[r]^-{g_{r_1}\times} 
		& H_{i+r_1+r_2}(\mathcal{D}^{r_1}\times(X_3^I\times D^{r_2},X_3^I\times D^{r_2}-\tilde{X}_2))
			\ar[r]
		& H_{i+r_1+r_2}(D^{r_1}\times X_3^I\times D^{r_2},D^{r_1}\times X_3^I\times D^{r_2}-E) \\
	H_{i+r_2}(\tilde{U}'_2,\tilde{U}_2'-\tilde{X}_2)
			\ar[r]^-{g_{r_1}\times} \ar[u]_-{\cong} \ar[d]^-{\tilde{u}_2\frown} \ar@{}[dr]|{(-1)^{r_1(c_2+r_2)}}
		& H_{i+r_1+r_2}(\mathcal{D}^{r_1}\times(\tilde{U}'_2,\tilde{U}_2'-\tilde{X}_2))
			\ar[r] \ar[u]_-{\cong} \ar[d]^-{(1\times\tilde{u}_2)\frown}
		& H_{i+r_1+r_2}(D^{r_1}\times\tilde{U}_2',D^{r_1}\times\tilde{U}_2'-E)
			\ar[u]_-{\cong} \ar[d]^-{(1\times\tilde{u}_2)\frown} \\
	H_{i-c_2}(\tilde{U}'_2)
			\ar[r]^-{g_{r_1}\times}
		& H_{i+r_1-c_2}(\mathcal{D}^{r_1}\times\tilde{U}'_2)
			\ar[r]
		& H_{i+r_1-c_2}(D^{r_1}\times\tilde{U}_2',D^{r_1}\times\tilde{U}_2'-\tilde{X}_1) \\
	H_{i-c_2}(\tilde{X}_2)
			\ar[r]^-{g_{r_1}\times} \ar[u]_-{\cong}
		& H_{i+r_1-c_2}(\mathcal{D}^{r_1}\times\tilde{X}_2)
			\ar[r] \ar[u]_-{\cong}
		& H_{i+r_1-c_2}(D^{r_1}\times\tilde{X}_2,D^{r_1}\times\tilde{X}_2-\tilde{X}_1) \ar[u]_-{\cong}
}
}$\\
\\
$\displaystyle{
\xymatrix{
	H_{i+r_1}(D^{r_1}\times X_3^I,D^{r_1}\times X_3^I-\tilde{X}_1) \ar[d]^-{\times g_{r_2}}
		& H_{i+r_1}(\tilde{U}_1,\tilde{U}_1-\tilde{X}_1) \ar[l]_-{\cong} \ar[r]^-{\tilde{u}_1\frown} \ar[d]^-{\times g_{r_2}}
		& H_{i-c_1}(\tilde{U}_1) \ar[d]^-{\times g_{r_2}} \\
	H_{i+r_1+r_2}((D^{r_1}\times X_3^I,D^{r_1}\times X_3^I-\tilde{X}_1)\times\mathcal{D}^{r_2}) \ar[d]
		& H_{i+r_1+r_2}((\tilde{U}_1,\tilde{U}_1-\tilde{X}_1)\times\mathcal{D}^{r_2}) \ar[l]_-{\cong} \ar[r]^-{(\tilde{u}_1\times1)\frown} \ar[d]
		& H_{i-c_1+r_2}(\tilde{U}_1\times\mathcal{D}^{r_2}) \ar[d] \\
	H_{i+r_1+r_2}(D^{r_1}\times X_3^I\times D^{r_2},D^{r_1}\times X_3^I\times D^{r_2}-E)
		& H_{i+r_1+r_2}(\tilde{U}_1\times D^{r_2},\tilde{U}_1\times D^{r_2}-E) \ar[l]_-{\cong} \ar[r]^-{(\tilde{u}_1\times1)\frown}
		& H_{i-c_1+r_2}(\tilde{U}_1\times D^{r_2},\tilde{U}_1\times D^{r_2}-\tilde{X}_2) \\
	H_{i+r_1+r_2}(D^{r_1}\times\tilde{U}_2',D^{r_1}\times\tilde{U}_2'-E)
			\ar[d]^-{(1\times\tilde{u}_2)\frown} \ar[u]_-{\cong}
		& H_{i+r_1+r_2}(\tilde{U}_{12},\tilde{U}_{12}-E)
			\ar[l]_-{\cong} \ar[r]^-{j_1^\ast(\tilde{u}_1\times1)\frown}
			\ar[d]^-{j_2^\ast(1\times\tilde{u}_2)\frown} \ar[u]_-{\cong} \ar@{}[dr]|{(-1)^{(c_1+r_1)(c_2+r_2)}}
		& H_{i-c_1+r_2}(\tilde{U}_{12},\tilde{U}_{12}-\tilde{X}_2)
			\ar[d]^-{j_2^\ast(1\times\tilde{u}_2)\frown} \ar[u]_-{\cong} \\
	H_{i+r_1-c_2}(D^{r_1}\times\tilde{U}_2',D^{r_1}\times\tilde{U}_2'-\tilde{X}_1)
		& H_{i+r_1-c_2}(\tilde{U}_{12},\tilde{U}_{12}-\tilde{X}_1) \ar[l]_-{\cong} \ar[r]^-{j_1^\ast(\tilde{u}_1\times1)\frown}
		& H_{i-c_1-c_2}(\tilde{U}_{12},\tilde{U}_{12}-\tilde{X}_2) \\
	H_{i+r_1-c_2}(D^{r_1}\times\tilde{X}_2,D^{r_1}\times\tilde{X}_2-\tilde{X}_1) \ar[u]_-{\cong}
		& H_{i+r_1-c_2}(D^{r_1}\times\tilde{X}_2\cap\tilde{U}_1,D^{r_1}\times\tilde{X}_2\cap\tilde{U}_1-\tilde{X}_1)
			\ar[l]_-{\cong} \ar[r]^-{j_1^\ast(\tilde{u}_1\times1)\frown} \ar[u]_-{\cong}
		& H_{i-c_1-c_2}(D^{r_1}\times\tilde{X}_2\cap\tilde{U}_1). \ar[u]_-{\cong}
}}$
\end{tabular}
\end{turn}
\newpage
\[
\xymatrix{
	H_{i-c_1}(\tilde{X}_1) \ar@{=}[r] \ar[d]^-{g_{r_2}\times} \ar@{}[dr]|{(-1)^{(i-c_1)r_2}}
		& H_{i-c_1}(\tilde{X}_1) \ar[d]^-{\times g_{r_2}} \\
	H_{i-c_1+r_2}((D^{r_2},S^{r_2-1})\times\tilde{X}_1) \ar[r]^-{\cong} \ar[d]
		& H_{i-c_1+r_2}(X_3^I\times(D^{r_2},S^{r_2-1})) \ar[d] \\
	H_{i-c_1+r_2}(D^{r_2}\times\tilde{X}_1,D^{r_2}\times\tilde{X}_1-\tilde{X}_2) \ar[r]^-{\cong}
		& H_{i+r_2}(X_3^I\times D^{r_2},X_3^I\times D^{r_2}-\tilde{X}_2) \\
	H_{i-c_1+r_2}(D^{r_2}\times\tilde{X}_1\cap\tilde{U}_2,D^{r_2}\times\tilde{X}_1\cap\tilde{U}_2-\tilde{X}_2)
			\ar[r]^-{\cong} \ar[u]_-{\cong} \ar[d]^-{\tilde{u}_2\frown} \ar@{}[dr]|{(-1)^{d_3r_2}}
		& H_{i-c_1+r_2}(\tilde{X}_1\times D^{r_2}\cap\tilde{U}_2',\tilde{X}_1\times D^{r_2}\cap\tilde{U}_2'-\tilde{X}_2)
			\ar[u]_-{\cong} \ar[d]^-{\tilde{u}_2'\frown} \\
	H_{i-c_1-c_2}(D^{r_2}\times\tilde{X}_1\cap\tilde{U}_2) \ar[r]^-{\cong}
		& H_{i-c_1-c_2}(\tilde{X}_1\times D^{r_2}\cap\tilde{U}_2') \\
	H_{i-c_1-c_2}(E) \ar[r]^-{\cong} \ar[u]_-{\cong}
		& H_{i-c_1-c_2}(E) \ar[u]_-{\cong}
}
\]
Combining these diagrams, we obtain the desired commutativity of the shriek maps.
\end{proof}

Let us consider a more geometric situation, which will be used to define the local Reidemeister trace.
For the the maps
\[
	X_1\xrightarrow{\delta_1}X_3\xleftarrow{\delta_2}X_2
\]
between smooth connected oriented manifolds, we suppose that $X_2$ is closed and $\delta_2$ is an embedding.
We denote the dimensions by $d_i=\dim X_i$.
To define the homotopy pullback, consider the pullback square
\[
\xymatrix{
	\tilde{X}_1 \ar[d]^-{\epsilon_1}
		& E \ar[l] \ar[d]^-{\pi} \\
	X_3
		& X_2, \ar[l]
}
\]
where $\tilde{X}_1$ is given by
\[
	\tilde{X}_1=\{(x,\ell)\in X_1\times X_3^I\mid\ell(0)=\delta_1(x)\}
\]
and $\epsilon_1(x,\ell)=\ell(1)$.
We denote the canonical inclusion by $i\colon X_1\to \tilde{X}_1$, take a tubular neighborhood $U_2\subset X_3$ of $X_2$ and set $\tilde{U}_2=\epsilon_1^{-1}U_2$.
The Thom classes of $U_2$ and $\tilde{U}_2$ are denoted by $u_2$ and $\tilde{u}_2=\epsilon_1^{\ast}u_2$, respectively.

We take a closed subset $A\subset X$ such that there is a neighborhood $V\subset X_1$ of $A$ such that $\delta_1(V-A)\subset X_3-X_2$.
Then we may suppose that $\delta_1(V-A)\subset U_2-X_2$ replacing by a smaller neighborhood $V$ if necessary.
A local version of the shriek map is given as follows.
Note that the following map is independent of the choice of the neighborhood $V$.

\begin{dfn}
Define the maps
\begin{align*}
	(\delta_2)_A^!&\colon H_i(X_1,X_1-A)\to H_{i-d_2+d_3}(X_2),\\
	(\tilde{\delta}_2)_A^!&\colon H_i(X_1,X_1-A)\to H_{i-d_2+d_3}(E)
\end{align*}
by the composites
\begin{align*}
	(\delta_2)_A^!&\colon H_i(X_1,X_1-A)\xleftarrow{\cong}H_i(V,V-A)
		\xrightarrow{(\delta_1)_\ast}H_i(U_2,U_2-X_2)\xrightarrow{u_2\frown}H_{i-d_2+d_3}(U_2)\xleftarrow{\cong}H_{i-d_2+d_3}(X_2),\\
	(\tilde{\delta}_2)_A^!&\colon H_i(X_1,X_1-A)\xleftarrow{\cong}H_i(V,V-A)
		\xrightarrow{i_\ast}H_i(\tilde{U}_2,\tilde{U}_2-E)\xrightarrow{\tilde{u}_2\frown}H_{i-d_2+d_3}(\tilde{U}_2)\xleftarrow{\cong}H_{i-d_2+d_3}(E).
\end{align*}
\end{dfn}

For a compact subset $A\subset X_1$, the \textit{fundamental class} \cite[Section VIII.4]{MR1335915} $o_A\in H_{d_1}(X_1,X_1-A)$ is characterized by the property that the image of $o_A$ under the map
\[
	H_{d_1}(X_1,X_1-A)\to H_{d_1}(X_1,X_1-\{x\})
\]
is the class representing the orientation for any $x\in A$.
For any compact subset $A$, such a homology class exists uniquely.
For another compact subset $A'\subset A$, $o_A$ is mapped to $o_{A'}$ under the map
\[
	H_{d_1}(X_1,X_1-A)\to H_{d_1}(X_1,X_1-A').
\]
Note that if $X_1$ is closed, then the fundamental class $o_{X_1}$ of $X_1$ is the usual fundamental class $[X_1]$ and the shriek maps $\delta_{X_1},\tilde{\delta}_{X_1}$ coincide with $\delta,\tilde{\delta}$ defined in \ref{dfn_shriek}.

\begin{prp}
\label{prp_shriek_additivity}
Under the above setting, we suppose that $X_1$ is closed and $A$ is decomposed into the disjoint union
\[
	A=A_1\sqcup A_2
\]
of compact subsets $A_1,A_2\subset X_1$.
Then the following equalities hold:
\begin{align*}
	(\delta_2)_A^!=(\delta_2)_{A_1}^!o_{A_1}+(\delta_2)_{A_2}^!o_{A_2},
		\qquad (\tilde{\delta}_2)_A^!o_A=(\tilde{\delta}_2)_{A_1}^!o_{A_1}+(\tilde{\delta}_2)_{A_2}^!o_{A_2}.
\end{align*}
\end{prp}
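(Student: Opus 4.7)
The plan is to reduce everything to the observation that when $A = A_1 \sqcup A_2$ with $A_1, A_2$ disjoint compact subsets of $X_1$, the relative homology group $H_i(X_1, X_1-A)$ splits canonically as a direct sum, and the fundamental class $o_A$ splits accordingly as $o_{A_1} + o_{A_2}$; both shriek maps are built from natural maps that respect this decomposition.

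Concretely, I would first choose open sets $V_1, V_2 \subset X_1$ with $A_j \subset V_j$, $V_1 \cap V_2 = \emptyset$, and $\delta_1(V_j - A_j) \subset U_2 - X_2$ for $j = 1,2$; such neighborhoods exist because $A_1$ and $A_2$ are disjoint compact sets inside the Hausdorff manifold $X_1$, and a neighborhood satisfying the second condition exists for each $A_j$ separately (as in the discussion preceding the definition of the local shriek map). Set $V = V_1 \sqcup V_2$. By excision,
\[
    H_i(X_1, X_1 - A) \xleftarrow{\cong} H_i(V, V - A) \cong H_i(V_1, V_1 - A_1) \oplus H_i(V_2, V_2 - A_2),
\]
and under this decomposition the inclusion-induced map to $H_i(X_1, X_1 - A_j)$ is projection to the $j$-th summand (for $j = 1, 2$), composed with excision back.

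Next I would invoke the characterizing property of the fundamental class recalled just before the statement: the restriction map $H_{d_1}(X_1, X_1 - A) \to H_{d_1}(X_1, X_1 - A_j)$ sends $o_A$ to $o_{A_j}$. Combined with the direct sum decomposition above, this forces $o_A = o_{A_1} + o_{A_2}$ in $H_{d_1}(V, V-A)$.

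Finally, both shriek maps $(\delta_2)_A^!$ and $(\tilde{\delta}_2)_A^!$ are defined as composites of (a) the excision isomorphism $H_\ast(X_1, X_1 - A) \xleftarrow{\cong} H_\ast(V, V - A)$, (b) the map induced by $\delta_1$ or $i$, and (c) a cap product with a pulled-back Thom class followed by an excision isomorphism. Each of these three ingredients is a homomorphism of abelian groups and, on the direct sum decomposition above, factors through the corresponding data for $V_j$: applying $(\delta_1)_\ast$ (resp.\ $i_\ast$) and then capping with $u_2$ (resp.\ $\tilde{u}_2$) on $V_j$ yields exactly $(\delta_2)^!_{A_j}$ (resp.\ $(\tilde{\delta}_2)^!_{A_j}$) by their own definitions. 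Adding the contributions from $V_1$ and $V_2$ applied to $o_{A_1} + o_{A_2}$ gives the claimed identities.

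The only point requiring a little care is the bookkeeping for the fundamental class splitting, since $o_A$ is defined by its local behavior near each point of $A$, not by a global formula; but this is precisely handled by the characterization recalled in the excerpt, so no genuine difficulty arises. The rest is formal naturality of excision, $\delta_{1\ast}$ (resp.\ $i_\ast$), and cap product with respect to the open decomposition $V = V_1 \sqcup V_2$.
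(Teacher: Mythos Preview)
Your proposal is correct and follows essentially the same approach as the paper: choose disjoint neighborhoods $V_1, V_2$ of $A_1, A_2$ mapping off $X_2$ on the complement of $A_j$, use excision to split $H_\ast(V,V-A)$ as a direct sum, observe that $o_A$ corresponds to $o_{A_1}+o_{A_2}$ under this splitting, and then use linearity of the remaining maps (induced map followed by cap with the Thom class). The only presentational difference is that the paper observes the first equality follows from the second by applying $\pi_\ast$, whereas you handle both in parallel; this is immaterial.
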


\begin{proof}
It is sufficient to prove the second equation since the first equation is obtained from the latter applying the map $\pi_\ast$.
We can find neighborhoods $V_j\subset X_1$ of $A_j$ for $j=1,2$ such that $\delta_1(V_j-A_j)\subset X_3-X_2$ and $V_1\cap V_2=\emptyset$.
Then we have the following commutative diagram:
\[
\xymatrix{
	H_{d_1}(X_1,X_1-A) \ar@{=}[r] \ar[d]_-{i_\ast}
		& H_{d_1}(X_1,X_1-(A_1\sqcup A_2)) \ar[d]_-{i_\ast}
		& H_{d_1}(V_1,V_1-A_1)\oplus H_{d_1}(V_2,V_2-A_2) \ar[l]_-{\cong} \ar[d]^-{i_\ast} \\
	H_{d_1}(\tilde{X}_1) \ar[r] \ar[d]_-{(\epsilon_1)_\ast}
		& H_{d_1}(\tilde{X}_1,\tilde{X}_1-E) \ar[d]_-{(\epsilon_1)_\ast}
		& H_{d_1}(\tilde{U},\tilde{U}-E) \ar[l]_-{\cong} \ar[d]^-{(\epsilon_1)_\ast} \\
	H_{d_1}(X_3) \ar[r]
		& H_{d_1}(X_3,X_3-X_2)
		& H_{d_1}(U_2,U_2-X_2). \ar[l]^-{\cong}
}
\]
Let $a_j\in H_{d_1}(V_j,V_j-A_j)$ be the image of $o_{A_j}\in H_{d_1}(X_1,X_1-A_j)$ under the excision isomorphism.
For the Thom class $u_2\in H_{d_3-d_2}(U_2,U_2-X_2)$, we obtain
\begin{align*}
	\tilde{\delta}_2^!i_\ast o_A
		=\tilde{q}_\ast(\tilde{u}_2\frown(i_\ast a_1+i_\ast a_2))
		=(\tilde{\delta}_2)_{A_1}^!o_{A_1}+(\tilde{\delta}_2)_{A_2}^!o_{A_2}.
\end{align*}
where $\tilde{q}\colon\tilde{U}_2\xrightarrow{\simeq}E$ is the projection of the normal bundle.
\end{proof}

%%%%%%%%%%%%%%%%%%%
%%   Section 5   %%
%%%%%%%%%%%%%%%%%%%
\section{Local coincidence Lefschetz and Reidemeister traces}
\label{invariant traces}

In this section, we will introduce the local Reidemeister trace and check the additivity axiom appearing in \cite{MR2529499}.
Combining with the local computation of the coincidence Reidemeiter trace, we will prove that the geometric Reidemeister trace $\rho^{\Geom}(f,g)$ defined in Section \ref{geometric coincidence} is homotopy invariant.

Let $f,g\colon M\to N$ be continuous maps between smooth connected oriented manifolds and $m=\dim M$, $n=\dim N$.
Using the space
\[
	\tilde{M}_{f,g}:=\{(x,\ell_1,\ell_2)\in M\times N^I\times N^I\mid\ell_1(0)=f(x),\ell_2(0)=g(x)\}
\]
and maps
\[
	\epsilon\colon\tilde{M}_{f,g}\to N\times N,\qquad (x,\ell_1,\ell_2)\mapsto(\ell_1(1),\ell_2(1)),
\]
the homotopy equalizer $\Hoeq(f,g)$ appeared in Section \ref{geometric coincidence} is described as
\[
\xymatrix{
	M \ar[r]^-{i} \ar[dr]_-{(f\times g)\circ\Delta}
		& \tilde{M}_{f,g} \ar[d]^-{\epsilon}
		& \Hoeq(f,g) \ar[l]_-{\tilde{\Delta}} \ar[d]^-{\pi} \\
	& N\times N
		& N, \ar[l]^-{\Delta}
}
\]
where the square is pullback, and the map $i\colon M\to\tilde{M}_{f,g}$ is the canonical inclusion given by
\[
	i(x)=(x,(\text{the constant path at }f(x)),(\text{the constant path at }g(x))).
\]
The map $i$ is a homotopy equivalence.
Let $A\subset M$ be a compact subset and a neighborhood $V\subset M$ of $A$ such that $V\cap\Eq(f,g)=A$.
By the construction of the shriek maps in Section \ref{shriek maps} and this pullback square, we obtain the commutative diagram
\[
\xymatrix{
	H_\ast(M,M-A) \ar[r]^-{\tilde{\Delta}_A^!} \ar[dr]_-{\Delta_A^!}
		& H_{\ast-n}(\Hoeq(f,g)) \ar[d]^-{\pi_\ast} \\
		& H_{\ast-n}(N).
}
\]
Desired homotopy invariant constructions are obtained as follows.
\begin{dfn}
We define the maps
\begin{align*}
	R(f,g)_A&:=\tilde{\Delta}_A^!\colon H_\ast(M)\to H_{\ast-n}(\Hoeq(f,g)),\\
	\varLambda(f,g)_A&:=\Delta_A^!\colon H_\ast(M)\to H_{\ast-n}(N).
\end{align*}
We call $R(f,g)$ the \textit{(local) Reidemeister homomorphism} and $\varLambda(f,g)$ the \textit{(local) Lefschetz homomorphism}.
In particular, for the evaluation at the fundamental class $o_A\in H_m(M,M-A)$
\begin{align*}
	\rho(f,g)_A&:=R(f,g)_Ao_A\in H_{m-n}(\Hoeq(f,g)),\\
	\lambda(f,g)_A&:=\varLambda(f,g)_Ao_A\in H_{m-n}(N),
\end{align*}
we call $\rho(f,g)_A$ the \textit{(local) coincidence Reidemeister trace} and $\lambda(f,g)_A$ the \textit{(local) coincidence Lefschetz trace}.
If $M$ is closed, we will denote simply by $R(f,g)_M=R(f,g)$ and so on.
\end{dfn}

Like the case of the Lefschetz trace for fixed points, there is a trace formula for the coincidence Lefschetz trace.
For details, see \cite{MR3463529,MR1853657}.

By the homotopy invariance of the Pontrjagin--Thom construction, all of $R(f,g), \varLambda(f,g), \rho(f,g)$ and $\lambda(f,g)$ depend only on the homotopy classes of $f$ and $g$.
For the map $\pi_\ast\colon H_\ast(\Hoeq(f,g))\to H_\ast(N)$, we have the formulas
\[
	\pi_\ast\circ R(f,g)_A=\varLambda(f,g)_A,\qquad \pi_\ast\rho(f,g)_A=\lambda(f,g)_A.
\] 
In particular, if $m=n$, $R(f,g)_A$ is nontrivial only on $H_m(M)$ and thus can be recovered from $\rho(f,g)_A$.

The following fact is obvious by construction.

\begin{prp}
If $M$ is closed and $f$ and $g$ can be deformed into maps $f',g'$ of which the equalizer $\Eq(f',g')$ is empty, then all of $R(f,g)$, $\varLambda(f,g)$, $\rho(f,g)$ and $\lambda(f,g)$ are trivial.
\end{prp}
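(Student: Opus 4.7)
The plan starts with the homotopy invariance of $R(f,g)$, $\varLambda(f,g)$, $\rho(f,g)$ and $\lambda(f,g)$ recorded just above the proposition: it lets me replace $(f,g)$ with $(f',g')$ and assume from the outset that $\Eq(f,g)=\emptyset$. Since $\rho(f,g)=R(f,g)[M]$, $\varLambda(f,g)=\pi_\ast\circ R(f,g)$, and $\lambda(f,g)=\pi_\ast\rho(f,g)$, it suffices to show that $R(f,g)=\tilde{\Delta}^!\circ i_\ast=0$.

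The crucial geometric observation under the standing hypothesis is that the canonical homotopy equivalence $i\colon M\to\tilde{M}_{f,g}$, sending $x$ to the triple consisting of $x$ and the two constant paths at $f(x)$ and $g(x)$, has image contained in the open subspace $\tilde{M}_{f,g}-\Hoeq(f,g)$. Indeed, a point $(x,\ell_1,\ell_2)\in\tilde{M}_{f,g}$ lies in $\Hoeq(f,g)$ precisely when $\ell_1(1)=\ell_2(1)$, and for $i(x)$ this would force $f(x)=g(x)$, which has no solution by hypothesis. Hence $i_\ast\colon H_\ast(M)\to H_\ast(\tilde{M}_{f,g})$ factors through $H_\ast(\tilde{M}_{f,g}-\Hoeq(f,g))$.

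It then remains to prove that $\tilde{\Delta}^!$ annihilates every class in the image of $H_\ast(\tilde{M}_{f,g}-\Hoeq(f,g))\to H_\ast(\tilde{M}_{f,g})$. Unwinding Definition \ref{dfn_shriek} applied to the pullback square (with $E_1=\Hoeq(f,g)$, $E_2=\tilde{M}_{f,g}$, $X_1=N$, $X_2=N\times N$, and an embedding $j\colon N\hookrightarrow D^r$), the map $\tilde{\Delta}^!$ factors through
\[
H_{\ast+r}(D^r\times\tilde{M}_{f,g},S^{r-1}\times\tilde{M}_{f,g})\longrightarrow H_{\ast+r}(D^r\times\tilde{M}_{f,g},D^r\times\tilde{M}_{f,g}-\tilde{\Delta}'(\Hoeq(f,g))),
\]
where $\tilde{\Delta}'=(j\circ\pi,\tilde{\Delta})$. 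The second coordinate of $\tilde{\Delta}'$ is the honest subspace inclusion $\tilde{\Delta}\colon\Hoeq(f,g)\hookrightarrow\tilde{M}_{f,g}$, so $\tilde{\Delta}'(\Hoeq(f,g))\subset D^r\times\Hoeq(f,g)$, which is disjoint from $D^r\times(\tilde{M}_{f,g}-\Hoeq(f,g))$. Any class $g_r\times i_\ast\alpha$ obtained from $H_\ast(\tilde{M}_{f,g}-\Hoeq(f,g))$ is therefore represented in $D^r\times(\tilde{M}_{f,g}-\Hoeq(f,g))\subset D^r\times\tilde{M}_{f,g}-\tilde{\Delta}'(\Hoeq(f,g))$, and the long exact sequence of the triple $(D^r\times\tilde{M}_{f,g},\,D^r\times\tilde{M}_{f,g}-\tilde{\Delta}'(\Hoeq(f,g)),\,S^{r-1}\times\tilde{M}_{f,g})$ forces such a class to vanish in the right-hand relative group. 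Hence $\tilde{\Delta}^!\circ i_\ast=0$, and the four invariants are all trivial.

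I do not anticipate a substantive obstacle: the content is the geometrically transparent fact that $i(M)$ avoids $\Hoeq(f,g)$ whenever $f$ and $g$ have no coincidences, combined with the standard principle that the Pontrjagin--Thom shriek map is supported along the fiber product. The only point that requires attention is the identification of the second coordinate of $\tilde{\Delta}'$ with the literal inclusion $\Hoeq(f,g)\hookrightarrow\tilde{M}_{f,g}$, which is what lets disjointness in $\tilde{M}_{f,g}$ propagate to disjointness from $\tilde{\Delta}'(\Hoeq(f,g))$ after suspension.
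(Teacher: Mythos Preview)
Your argument is correct and is precisely the unpacking of what the paper means by ``obvious by construction'': once $\Eq(f,g)=\emptyset$, the inclusion $i(M)$ misses $\tilde{\Delta}(\Hoeq(f,g))\subset\tilde{M}_{f,g}$, so $i_\ast$ factors through the complement and the Pontrjagin--Thom collapse in the definition of $\tilde{\Delta}^!$ kills it. The paper gives no further detail, so your write-up is a faithful (if more explicit) rendering of the intended one-line proof; the only cosmetic simplification would be to replace the triple argument with the direct observation that the inclusion of pairs factors through a pair of the form $(A',A')$.
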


The following proposition follows immediately from Proposition \ref{prp_shriek_additivity}.

\begin{prp}
\label{prp_trace_additivity}
For a compact subset $A\subset M$ as above, if $A$ is the disjoint union of compact subsets $A_1,A_2\subset M$, then the following equalities hold:
\[
	\rho(f,g)_A=\rho(f,g)_{A_1}+\rho(f,g)_{A_2},
		\qquad\lambda(f,g)_A=\lambda(f,g)_{A_1}+\lambda(f,g)_{A_2}
\] 
\end{prp}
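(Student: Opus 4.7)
The plan is to derive both equalities as direct applications of Proposition \ref{prp_shriek_additivity}, once we identify the data of this section with the hypotheses there. The key translation is: set $X_1 = M$, $X_3 = N \times N$, $X_2 = N$, $\delta_1 = (f,g) \colon M \to N\times N$, and $\delta_2 = \Delta \colon N \to N\times N$. Since $\Delta$ is a closed embedding of the closed manifold $N$, and $M$ is closed, the framework of Section \ref{shriek maps} applies. The homotopy pullback $E$ in that section becomes precisely $\Hoeq(f,g)$ via the homotopy equivalence $i \colon M \to \tilde{M}_{f,g}$ built into the pullback square at the beginning of this section.

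Next, I would verify that the neighborhood hypothesis matches. In Proposition \ref{prp_shriek_additivity} one requires, for each compact subset, a neighborhood $V \subset X_1$ with $\delta_1(V - A) \subset X_3 - X_2$; in our translation this reads $(f,g)(V-A) \subset (N\times N) - \Delta(N)$, i.e., $f(x) \neq g(x)$ on $V - A$. But by the choice of $V$ made at the start of Section \ref{invariant traces} we have $V \cap \Eq(f,g) = A$, so this is automatic. The same holds for neighborhoods $V_1, V_2$ of the summands $A_1, A_2$, and these may be taken to satisfy $V_1 \cap V_2 = \emptyset$ since $A_1, A_2$ are compact and disjoint.

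Under this identification, the local shriek maps defined in Section \ref{shriek maps} coincide, by construction, with the homomorphisms
\[
\Delta_A^! = \varLambda(f,g)_A, \qquad \tilde{\Delta}_A^! = R(f,g)_A,
\]
and the local traces are their values on the fundamental class $o_A$. Proposition \ref{prp_shriek_additivity} then yields immediately
\[
\tilde{\Delta}_A^! o_A = \tilde{\Delta}_{A_1}^! o_{A_1} + \tilde{\Delta}_{A_2}^! o_{A_2}, \qquad \Delta_A^! o_A = \Delta_{A_1}^! o_{A_1} + \Delta_{A_2}^! o_{A_2},
\]
which by definition are the two desired equalities $\rho(f,g)_A = \rho(f,g)_{A_1} + \rho(f,g)_{A_2}$ and $\lambda(f,g)_A = \lambda(f,g)_{A_1} + \lambda(f,g)_{A_2}$.

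There is no real obstacle; the content is all in Proposition \ref{prp_shriek_additivity}. The only point that might warrant a brief comment is that one could equally deduce the $\lambda$-identity from the $\rho$-identity by applying $\pi_\ast$ and using the formula $\pi_\ast \circ R(f,g)_A = \varLambda(f,g)_A$ established just before the proposition, so strictly speaking only one of the two applications is needed.
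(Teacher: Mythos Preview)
Your proof is correct and follows exactly the paper's approach: the paper's proof consists of the single sentence ``follows immediately from Proposition~\ref{prp_shriek_additivity},'' and you have simply spelled out the dictionary $(X_1,X_2,X_3,\delta_1,\delta_2)=(M,N,N\times N,(f,g),\Delta)$ under which that proposition applies. Your closing remark that the $\lambda$-identity also follows from the $\rho$-identity via $\pi_\ast$ mirrors the observation made in the proof of Proposition~\ref{prp_shriek_additivity} itself.
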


Now let us prove that the geometric definitions of the coincidence Lefschetz and Reidemeister traces in Section \ref{geometric coincidence} are equivalent to those in this section.
Decompose $\Eq(f,g)$ as the disjoint union of the connected components
\[
	\Eq(f,g)=L_1\sqcup\cdots\sqcup L_k
\]
and take a tubular neighborhood $V_j\subset M$ of $L_j$ for each $j$ such that $V_i\cap V_j=\emptyset$ for $i\ne j$.
We may assume that $L_j$ is orientable if and only if $j\le k_0$ and fix the orientation of $L_j$ for $j\le k_0$.
Let $U\subset N\times N$ be a tubular neighborhood of $\Delta(N)$ and $\tilde{U}=\epsilon^{-1}(U)\subset\tilde M_{f,g}$ its inverse image by the projection $\epsilon$.
We may assume that $\epsilon\circ i(V_j-L_j)\subset U-\Delta(N)$.

\begin{thm}
\label{thm_local_trace}
For $j\le k_0$, the following equality holds:
\[
	\rho(f,g)_{L_j}=\ind(f,g;L_j)\iota_\ast[L_j].
\]
For $j>k_0$, $\rho(f,g)_{L_j}$ vanishes.
\end{thm}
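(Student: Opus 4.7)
My plan is to compute $\tilde{\Delta}^!_{L_j}o_{L_j}$ directly from the definition of the shriek map in Section~\ref{invariant traces}, working within the tubular neighborhood $V_j\cong\nu_j$ of $L_j$. The key observation is that $\tilde{u}_2=\epsilon^\ast u_2$ and $\epsilon\circ i=(f,g)$, so $i^\ast\tilde{u}_2=(f,g)^\ast u_2\in H^n(V_j,V_j-L_j;R)$. Applying the projection formula to the inclusion of pairs $i\colon(V_j,V_j-L_j)\to(\tilde{U}_2,\tilde{U}_2-\Hoeq(f,g))$ rewrites
\[
\tilde{\Delta}^!_{L_j}o_{L_j}=\tilde{q}_\ast i_\ast\bigl((f,g)^\ast u_2\frown o_{L_j}\bigr),
\]
where $\tilde{q}\colon\tilde{U}_2\xrightarrow{\simeq}\Hoeq(f,g)$ is the retraction. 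Since $i$ restricts to $\iota$ on $L_j$, the problem reduces to identifying the class $(f,g)^\ast u_2\frown o_{L_j}\in H_{m-n}(V_j;R)\cong H_{m-n}(L_j;R)$.

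For $j\le k_0$, the $R$-orientability of $M$ and of $L_j$ makes $\nu_j\cong V_j$ an oriented rank-$n$ bundle with Thom class $\tau_j\in H^n(V_j,V_j-L_j;R)$, and the Thom isomorphism identifies $H^n(V_j,V_j-L_j;R)\cong R\cdot\tau_j$. Hence $(f,g)^\ast u_2=c\,\tau_j$ for some $c\in R$. I will determine $c$ by restricting to a single normal fiber $F_x\cong\mathbb{R}^n$ at a chosen point $x\in L_j$: on this fiber, the map $(f,g)$ is the assignment $y\mapsto(\varphi_x(y),\psi_x(y))$, and projection onto the normal direction of $\Delta(N)$ is (up to sign) the map $\Phi_x(y)=\psi_x(y)-\varphi_x(y)$ of Section~\ref{geometric coincidence}. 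By naturality of Thom classes under fiber inclusion, together with the standard conversion between a cohomological pullback coefficient and the homological degree of $\Phi_x$, one obtains $c=\ind(f,g;L_j)$. Combining this with the characterization $\tau_j\frown o_{L_j}=[L_j]$ of the fundamental class via Thom isomorphism, I get
\[
\tilde{\Delta}^!_{L_j}o_{L_j}=\ind(f,g;L_j)\,\tilde{q}_\ast i_\ast[L_j]=\ind(f,g;L_j)\,\iota_\ast[L_j].
\]

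For $j>k_0$, the class $\rho(f,g)_{L_j}$ lies in the image of $\iota_\ast\colon H_{m-n}(L_j;R)\to H_{m-n}(\Hoeq(f,g);R)$: since $V_j$ deformation retracts onto $L_j$ and $\tilde{q}\circ i$ agrees with $\iota$ on $L_j$, up to homotopy $\tilde{q}\circ i\simeq\iota\circ p$, where $p\colon V_j\to L_j$ is the bundle projection. Because $L_j$ is not $R$-orientable under the standing convention of Section~\ref{geometric coincidence}, $H_{m-n}(L_j;R)=0$, and hence $\rho(f,g)_{L_j}=0$.

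The main obstacle will be the identification $(f,g)^\ast u_2=\ind(f,g;L_j)\,\tau_j$: this step requires translating carefully between the cohomological Thom-class formalism of Section~\ref{shriek maps} and the homological degree definition of $\ind(f,g;L_j)$ in Section~\ref{geometric coincidence}, and keeping track of the orientation of the normal bundle $\nu_\Delta\cong TN$ of $\Delta(N)\subset N\times N$ used to define $u_2$, so that the sign of the coincidence index agrees with the convention of Dold's book.
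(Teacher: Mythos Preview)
Your proposal is correct and follows essentially the same route as the paper: apply the projection formula to rewrite $\tilde\Delta^!_{L_j}o_{L_j}$ as $\tilde q_\ast i_\ast(\eta_j^\ast u\frown a_j)$ with $\eta_j=(f,g)|_{V_j}$ (your $(f,g)^\ast u_2$ is the paper's $\eta_j^\ast u$, your $\tau_j$ is the paper's $v_j$), then identify $\eta_j^\ast u=\ind(f,g;L_j)\,v_j$ by restricting to a single normal fiber and reading off the degree of $\Phi_x$. For $j>k_0$ the paper asserts $H_m(V_j,V_j-L_j)=0$, which is not literally correct since $M$ is oriented; your argument via $H_{m-n}(L_j;R)=0$ is the cleaner way to phrase the intended vanishing.
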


\begin{proof}
Let $a_j\in H_m(V_j,V_j-L_j)$ be the image of $o_{L_j}\in H_m(M,M-L_j)$ under the excision isomorphism.
For the Thom class $u\in H_m(U,U-\Delta(N))$, we have
\begin{align*}
	\rho(f,g)_{L_j}=\tilde{\Delta}_{L_j}^!o_{L_j}
		=\tilde{q}_\ast(\epsilon^\ast u\frown i_\ast a_j)
		=\tilde{q}_\ast i_\ast(i^\ast\epsilon^\ast u\frown a_j)
		=\tilde{q}_\ast i_\ast(\eta_j^\ast u\frown a_j),
\end{align*}
where $\tilde{q}\colon\tilde{U}\xrightarrow{\simeq}\Hoeq(f,g)$ is the projection of the normal bundle and $\eta_j:=(f\times g)\circ\Delta|_{V_j}\colon V_j\to U$.
For $x\in L_j$, take neighborhoods $D_x\subset M$ of $x$ and $D'_{f(x)}\subset N$ of $f(x)$. 
To compute $\eta_j^\ast u$, consider the following commutative diagram:
\[
\xymatrix{
	(D_x,D_x-L_j) \ar[r]^-{\cong} \ar[d]_-{\eta_j}
		& (\mathbb{R}^m,\mathbb{R}^m-\mathbb{R}^{m-n}\times\{0\}) \ar[r]^-{\simeq} \ar[d]
		& (\mathbb{R}^n,\mathbb{R}^n-\{0\}) \ar[d]^-{\Phi_x} \\
	(D'_{f(x)}\times D'_{f(x)},D'_{f(x)}\times D'_{f(x)}-\Delta) \ar[r]_-{\cong}
		& (\mathbb{R}^n\times\mathbb{R}^n,\mathbb{R}^n\times\mathbb{R}^n-\Delta) \ar[r]_-{\simeq}
		& (\mathbb{R}^n,\mathbb{R}^n-\{0\}),
}
\]
where $\Delta$ denotes the appropriate diagonal subsets, the top right arrow is given by the projection on to the latter $n$ factors, and the bottom right arrow by the difference map $(y_1,y_2)\mapsto y_1-y_2$.
This verifies the equality $\eta_j^\ast u=\ind(f,g;L_j)v_j$ where $v_j\in H_n(V_j,V_j-L_j)$ is the Thom class of the normal bundle.
Thus we can continue the above computation as follows:
\begin{align*}
	\rho(f,g)_{L_j}
		&=\tilde{q}_\ast i_\ast(\eta_j^\ast u\frown a_j)\\
		&=\ind(f,g;L_j)\tilde{q}_\ast i_\ast(v_j\frown a_j)\\
		&=\ind(f,g;L_j)\tilde{q}_\ast i_\ast[L_j]
		=\ind(f,g;L_j)\iota_\ast[L_j]		
\end{align*}
by our convention of the orientation of $V_j$ in Section \ref{geometric coincidence}.
The rest of the theorem follows from the fact that $H_m(V_j,V_j-L_j)=0$ for $j>k_0$.
\end{proof}

Combining Proposition \ref{prp_trace_additivity} and Theorem \ref{thm_local_trace}, our Reidemeister traces coincide as follows.

\begin{cor}
\label{cor_geom_equals_inv}
Let $f,g\colon M\to N$ as above.
If $\Eq(f,g)\subset M$ is a smooth closed submanifold of dimension $m-n$, then
\[
	\rho(f,g)=\rho^{\Geom}(f,g),\qquad\lambda(f,g)=\lambda^{\Geom}(f,g).
\]
As a consequence, $\rho^{\Geom}(f,g)$ and $\lambda^{\Geom}(f,g)$ depend only on the homotopy classes of $f$ and $g$.
\end{cor}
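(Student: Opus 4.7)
The plan is to assemble the geometric trace out of local contributions that Theorem \ref{thm_local_trace} already identifies, and then to identify the sum with the invariant trace via the additivity of Section \ref{invariant traces}.

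First I will show that $\rho(f,g)$, which by the convention at the end of Section \ref{invariant traces} abbreviates $\rho(f,g)_M = \tilde{\Delta}^![M]$, equals the localised trace $\rho(f,g)_{\Eq(f,g)}$. The point is that the Pontrjagin--Thom collapse used to build $\tilde{\Delta}^!$ factors through a tubular neighborhood $\tilde{U}_2 \subset \tilde{M}_{f,g}$ of $\tilde{\Delta}(\Hoeq(f,g))$: since $i\colon M - \Eq(f,g) \to \tilde{M}_{f,g}$ (inclusion as constant paths) avoids $\tilde{\Delta}(\Hoeq(f,g))$, the composite factors through $H_m(M, M-\Eq(f,g))$, in which the fundamental class $[M]$ is mapped to $o_{\Eq(f,g)}$. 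This gives $\tilde{\Delta}^![M] = \tilde{\Delta}_{\Eq(f,g)}^! o_{\Eq(f,g)} = \rho(f,g)_{\Eq(f,g)}$.

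Next, since $\Eq(f,g) = L_1\sqcup\cdots\sqcup L_k$ is a disjoint union of compact connected submanifolds, Proposition \ref{prp_trace_additivity} yields
\[
\rho(f,g) \;=\; \rho(f,g)_{\Eq(f,g)} \;=\; \sum_{j=1}^{k} \rho(f,g)_{L_j}.
\]
Applying Theorem \ref{thm_local_trace} term-by-term, each summand equals $\ind(f,g;L_j)\,\iota_\ast[L_j]$ when $L_j$ is orientable ($j \le k_0$) and vanishes otherwise, so the sum is exactly $\rho^{\Geom}(f,g)$. The Lefschetz equality $\lambda(f,g) = \lambda^{\Geom}(f,g)$ then follows by applying $\pi_\ast$ to both sides and using $\pi\circ\iota = f|_{\Eq(f,g)}$ together with the relation $\pi_\ast\rho(f,g)_A = \lambda(f,g)_A$ recorded in Section \ref{invariant traces}. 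The final assertion of the corollary is automatic, since $\rho(f,g)$ and $\lambda(f,g)$ are homotopy invariants of $f$ and $g$ by construction.

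The main obstacle is the first step, where one must verify that the global shriek $\tilde{\Delta}^![M]$ agrees with the localised expression $\tilde{\Delta}^!_{\Eq(f,g)} o_{\Eq(f,g)}$, even though the strict hypotheses of the local construction do not literally apply to $A = M$. In practice this is a naturality check for the Pontrjagin--Thom construction: both homology classes are computed by capping with the same Thom class $\tilde{u}_2$ on the same tubular neighborhood $\tilde{U}_2$, and differ only in the initial relativisation step, which is clearly compatible. Once that is in hand, the rest of the proof is pure bookkeeping from Proposition \ref{prp_trace_additivity} and Theorem \ref{thm_local_trace}.
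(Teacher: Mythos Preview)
Your argument is correct and matches the paper's own one-line proof (``Combining Proposition~\ref{prp_trace_additivity} and Theorem~\ref{thm_local_trace}''). The extra compatibility step you spell out---that the global $\rho(f,g)=\tilde{\Delta}^![M]$ agrees with the localised $\rho(f,g)_{\Eq(f,g)}$---is left implicit in the paper (it is essentially the remark preceding Proposition~\ref{prp_shriek_additivity} that the local shriek with $A=X_1$ recovers the global one, together with the obvious factorization through $H_m(M,M-\Eq(f,g))$), but this is the routine naturality check you describe, not a different approach.
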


The invariant considered in Remark \ref{rem_another_Lefschetz} is also a homotopy invariant by the following corollary.

\begin{cor}
\label{cor_another_Lefschetz}
The image of $\rho(f,g)$ under the map $H_{m-n}(\Hoeq(f,g))\to H_{m-n}(M)$ is
\[
\sum_{j=1}^{k_0}\ind(f,g;L_j)[L_j]\in H_{m-n}(M).
\]
As a consequence, the above homology class is also a homotopy invariant.
\end{cor}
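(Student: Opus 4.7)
The plan is to deduce the corollary directly from Corollary \ref{cor_geom_equals_inv} together with the observation that the projection from the homotopy equalizer to $M$ intertwines nicely with the canonical inclusion $\iota\colon\Eq(f,g)\to\Hoeq(f,g)$.

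First I would identify explicitly the map $H_{m-n}(\Hoeq(f,g))\to H_{m-n}(M)$ referred to in the statement. By construction, $\Hoeq(f,g)=\{(x,\ell)\in M\times N^I\mid\ell(0)=f(x),\ell(1)=g(x)\}$ comes equipped with a canonical projection $p\colon\Hoeq(f,g)\to M$, $p(x,\ell)=x$. The composite $p\circ\iota\colon\Eq(f,g)\hookrightarrow M$ is simply the inclusion of the coincidence set, because $\iota$ sends $x$ to the pair $(x,\text{constant path at }f(x)=g(x))$.

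Next I would apply Corollary \ref{cor_geom_equals_inv}, after deforming $f$ and $g$ (if necessary) so that $\Eq(f,g)$ is a smooth closed $(m-n)$-submanifold of $M$; this is possible by the Thom transversality theorem. Under this assumption,
\[
    \rho(f,g)=\rho^{\Geom}(f,g)=\sum_{j=1}^{k_0}\ind(f,g;L_j)\,\iota_\ast[L_j].
\]
Applying $p_\ast$ and using $p\circ\iota=\text{inclusion}$, I obtain
\[
    p_\ast\rho(f,g)=\sum_{j=1}^{k_0}\ind(f,g;L_j)\,[L_j]\in H_{m-n}(M),
\]
which is the asserted formula.

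For the homotopy invariance claim, I would argue that if $f\simeq f'$ and $g\simeq g'$, then a choice of homotopies induces a homotopy equivalence $h\colon\Hoeq(f,g)\to\Hoeq(f',g')$ compatible up to homotopy with the projections to $M$, i.e.\ $p'\circ h\simeq p$. Since $\rho(f,g)$ depends only on the homotopy classes of $f,g$ as an element of $H_{m-n}(\Hoeq(f,g))$ (as already noted after the definition of $\rho(f,g)_A$), the naturality $p'_\ast h_\ast=p_\ast$ on homology yields the equality of the images in $H_{m-n}(M)$. I expect no serious obstacle here; the only mild subtlety is checking that the compatibility $p'\circ h\simeq p$ genuinely holds for the standard model of $h$ obtained by concatenating paths with the chosen homotopies, which is straightforward from the definitions.
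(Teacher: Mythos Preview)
Your argument is correct and is precisely the intended deduction: the paper states this as an immediate corollary of Corollary~\ref{cor_geom_equals_inv}, and your use of $p\circ\iota=\text{inclusion}$ together with the homotopy compatibility of the projection $p$ under the identification $\Hoeq(f,g)\simeq\Hoeq(f',g')$ is exactly what is implicit there. The only cosmetic point is that the corollary is already stated in the context where $\Eq(f,g)$ is assumed to be a smooth $(m-n)$-submanifold, so the transversality deformation is not needed for the first assertion, only (implicitly) for the homotopy invariance claim.
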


%%%%%%%%%%%%%%%%%%%
%%   Section 6   %%
%%%%%%%%%%%%%%%%%%%
\section{Some properties of coincindece Lefschetz and Reidemeister traces}
\label{properties}

In this section, we study symmetry, product and naturality of coincindece Lefschetz and Reidemeister traces.
For coincidence Lefschetz traces, they have been already proved by Saveliev \cite{MR1853657}.

Let $f,g\colon M\to N$ be continuous maps between smooth connected closed oriented manifolds and $m=\dim M$, $n=\dim N$.
Consider the commutative diagram
\[
\xymatrix{
	\tilde{M}_{f,g} \ar[r]^-{\epsilon} \ar[d]
		& N\times N \ar[d]^-T
		& N \ar[l]_-{\Delta} \ar@{=}[d] \\
	\tilde{M}_{g,f} \ar[r]_-{\epsilon}
		& N\times N
		& N, \ar[l]^-{\Delta}
}
\]
where $\tilde{M}_{f,g}\to\tilde{M}_{g,f}$ reverses the path connecting $f(x)$ and $g(x)$ and $T$ is the switching of the factors.
This diagram induces the homeomorphism on the pullback:
\[
	\tilde{T}\colon\Hoeq(f,g)\to\Hoeq(g,f).
\]

\begin{prp}
\label{prp_symmetry}
The following equalities hold:
\begin{align*}
	&R(g,f)=(-1)^n\tilde{T}_\ast\circ R(f,g),
		&&\varLambda(g,f)=(-1)^n\varLambda(f,g), \\
	&\rho(g,f)=(-1)^n\tilde{T}_\ast\rho(f,g),
		&&\lambda(g,f)=(-1)^n\lambda(f,g).
\end{align*}
\end{prp}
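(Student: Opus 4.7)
The plan is to derive all four identities from a single geometric observation: the switch map $T\colon N\times N\to N\times N$ pulls back the Thom class of the diagonal to $(-1)^n$ times itself. Once the identity for the Reidemeister homomorphism $R$ is established, the remaining three follow formally by applying $\pi_*$ (using $\pi\circ\tilde{T}=\pi$) and by evaluating at the fundamental class $[M]$.

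First I would identify how the two homotopy-pullback setups for $(f,g)$ and $(g,f)$ are linked. The path-reversal $\tau\colon\tilde{M}_{f,g}\to\tilde{M}_{g,f}$, $(x,\ell_1,\ell_2)\mapsto(x,\ell_2,\ell_1)$, is a homeomorphism satisfying $\epsilon'\circ\tau=T\circ\epsilon$ and $\tau\circ i_{f,g}=i_{g,f}$ (reversing a constant path yields the same constant path); it restricts on the homotopy equalizers to the map $\tilde{T}$ in the proposition.

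Next comes the key sign computation. The normal bundle $\nu$ of $\Delta(N)\subset N\times N$ is canonically identified with $TN$ via $(v_1,v_2)\mapsto v_1-v_2$, and in this identification the differential of $T$ acts fiberwise by $-\operatorname{id}$. Because multiplication by $-1$ on an oriented rank-$n$ bundle multiplies the Thom class by $(-1)^n$, we obtain $T^*u=(-1)^n u$ for the Thom class $u$ of a $T$-invariant tubular neighborhood $U$ of $\Delta(N)$. Setting $\tilde{U}=\epsilon^{-1}U$ and $\tilde{U}'=(\epsilon')^{-1}U$, this yields $\tau^*(\epsilon')^*u=(-1)^n\epsilon^*u$.

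Finally, I would write out the shriek-map formula from Definition \ref{dfn_shriek} for $R(g,f)$ and transport it back through $\tau$. Using cap-product naturality $u\frown\tau_*b=\tau_*(\tau^*u\frown b)$ together with $\tilde{q}'\circ\tau=\tilde{T}\circ\tilde{q}$ for the tubular-neighborhood projections, the factor $(-1)^n$ emerges cleanly and one concludes $R(g,f)=(-1)^n\tilde{T}_*\circ R(f,g)$. Applying $\pi_*$ gives $\varLambda(g,f)=(-1)^n\varLambda(f,g)$, and evaluating at $[M]$ produces the $\rho$ and $\lambda$ identities. The only real obstacle is bookkeeping: one has to verify that the tubular neighborhoods and Thom classes for the two setups correspond genuinely under $\tau$, so that no extra sign sneaks in beyond the one originating from $T^*u$.
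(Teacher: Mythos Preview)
Your argument is correct and is essentially the same as the paper's: the paper also chooses a $T$-invariant tubular neighborhood $U$, records $T^{*}u=(-1)^{n}u$, and reads off the result from the evident commutative ladder between the shriek compositions for $(f,g)$ and $(g,f)$. Your write-up is a bit more explicit than the paper's in two places---the normal-bundle justification of the sign $T^{*}u=(-1)^{n}u$ and the observation $\pi\circ\tilde{T}=\pi$ used to pass from the $R$-identity to the $\varLambda$-identity---but the underlying mechanism is identical.
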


\begin{proof}
We may assume that the tubular neighborhood $U\subset N\times N$ of $\Delta(N)$ satisfies $T(U)\subset U$.
Then it is easy to see that under the isomorphism
\[
	T^\ast\colon H^\ast(U,U-\Delta(N))\to H^\ast(U,U-\Delta(N)),
\]
the Thom class $u$ is mapped to $(-1)^nu$.
We denote the inverse image of $U$ under $\tilde{M}_{f,g}\to N\times N$ by $\tilde{U}\subset\tilde{M}_{f,g}$ and under $\tilde{M}_{g,f}\to N\times N$ by $\tilde{U}'\subset\tilde{M}_{g,f}$.
Then the desired equalities follow from the commutative diagram
\[
\xymatrix{
	H_\ast(M) \ar[r]^-{i_\ast} \ar@{=}[d]
		& H_\ast(\tilde{M}_{f,g}) \ar[r] \ar[d]
		& H_\ast(\tilde{U},\tilde{U}-\Hoeq(f,g)) \ar[r]^-{(-1)^n\pi^\ast u\frown} \ar[d]
		& H_\ast(\tilde{U}) \ar[r]^-{\cong} \ar[d]
		& H_\ast(\Hoeq(f,g)) \ar[d]^-{\tilde{T}_\ast} \\
	H_\ast(M) \ar[r]^-{i_\ast}
		& H_\ast(\tilde{M}_{g,f}) \ar[r]
		& H_\ast(\tilde{U}',\tilde{U}'-\Hoeq(g,f)) \ar[r]^-{\pi^\ast u\frown}
		& H_\ast(\tilde{U}') \ar[r]^-{\cong}
		& H_\ast(\Hoeq(g,f)). \\
}
\]
\end{proof}

Let $f',g'\colon M'\to N'$ be continuous maps between smooth connected closed oriented manifolds and $m'=\dim M'$, $n'=\dim N'$.
Consider the commutative diagram
\[
\xymatrix{
	\tilde{M}_{f,g}\times\tilde{M}'_{f',g'} \ar[r]^-{\epsilon\times\epsilon} \ar[d]
		& (N\times N)\times(N'\times N') \ar[d]^-{S}
		& N\times N' \ar[l]_-{\Delta\times\Delta} \ar@{=}[d] \\
	(M\times M')^{\sim}_{f\times f',g\times g'} \ar[r]_-{\epsilon}
		& (N\times N')\times(N\times N')
		& N\times N', \ar[l]^-{\Delta}
}
\]
where the left vertical arrow is given by
\[
	[(x,\ell_1,\ell_2),(x',\ell_1',\ell_2')]\mapsto(x,x',\ell_1,\ell_1',\ell_2,\ell_2')
\]
and $S$ is the switching of the factors.
This diagram induces the homeomorphism on the pullback:
\[
	\tilde{S}\colon\Hoeq(f,g)\times\Hoeq(f',g')\to\Hoeq(f\times f',g\times g').
\]

\begin{prp}
\label{prp_product}
For $a\in H_\ast(M)$ and $a'\in H_\ast(M')$, the following equalities hold:
\begin{align*}
	R(f\times f',g\times g')(a\times a')&=(-1)^{n'(n+|a|)}\tilde{S}_\ast(R(f,g)a\times R(f',g')a'),\\
	\varLambda(f\times f',g\times g')(a\times a')&=(-1)^{n'(n+|a|)}\varLambda(f,g)a\times\varLambda(f',g')a'.
\end{align*}
In particular, for the coincidence Lefschetz and Reidemeister traces, the following equalities hold:
\begin{align*}
	\rho(f\times f',g\times g')&=(-1)^{n'(n+|a|)}\tilde{S}_\ast(\rho(f,g)\times\rho(f',g')),\\
	\lambda(f\times f',g\times g')&=(-1)^{n'(n+|a|)}\lambda(f,g)\times\lambda(f',g').
\end{align*}
\end{prp}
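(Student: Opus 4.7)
The plan is to mimic the commutative-diagram argument of Proposition~\ref{prp_symmetry}, adapted to the product setting via the switching map $S$ appearing just before the statement. The key point is that although $\Delta_{N\times N'}$ is not literally the product $\Delta_N\times\Delta_{N'}$, the two are identified after the factor switch $S$, and the Thom class transports on the nose to the external product of the individual Thom classes, so that the only sign will come from the external cap-product formula.

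Concretely, write $u$, $u'$, $u_\times$ for the Thom classes of the diagonals $\Delta(N)\subset N\times N$, $\Delta(N')\subset N'\times N'$, $\Delta(N\times N')\subset(N\times N')^2$ respectively. The observation I will need is that the differential $dS$ carries the normal bundle of $\Delta(N)\times\Delta(N')$ in $(N\times N)\times(N'\times N')$ to the normal bundle of $\Delta(N\times N')$ in $(N\times N')^2$, and both bundles are canonically $TN\oplus TN'$ with the product orientation. Consequently
\[
	S^\ast u_\times = u\times u',
\]
with no extra sign. This is the main delicate point.

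Next I compute $R(f\times f',g\times g')(a\times a')$ from the cap-product description of the shriek map used in Section~\ref{shriek maps}. Setting $\eta_N=(f\times g)\circ\Delta_M$, $\eta_{N'}=(f'\times g')\circ\Delta_{M'}$, and $\eta=(f\times f'\times g\times g')\circ\Delta_{M\times M'}$, the diagram in the statement gives $\eta=S\circ(\eta_N\times\eta_{N'})$, hence by naturality
\[
	\eta^\ast u_\times=(\eta_N\times\eta_{N'})^\ast(u\times u')=\eta_N^\ast u\times\eta_{N'}^\ast u'.
\]
The external cap-product formula of Dold \cite{MR1335915} then yields
\[
	(\eta_N^\ast u\times\eta_{N'}^\ast u')\frown(a\times a')=(-1)^{n'(|a|-n)}(\eta_N^\ast u\frown a)\times(\eta_{N'}^\ast u'\frown a'),
\]
and the exponent agrees with $n'(n+|a|)$ modulo $2$. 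Pushing forward along the inclusions $i\times i'$ and the normal-bundle projections---each of which commutes with cross products without further signs---and applying $\tilde S_\ast$ to identify $\Hoeq(f,g)\times\Hoeq(f',g')$ with $\Hoeq(f\times f',g\times g')$, I obtain the claimed formula for $R$.

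The $\varLambda$-formula follows by postcomposing with $\pi_\ast$, which commutes with $\tilde S_\ast$ and with cross products. The trace formulas for $\rho$ and $\lambda$ then follow by specializing $a,a'$ to the fundamental classes, using $[M\times M']=[M]\times[M']$. The main work is the orientation check establishing $S^\ast u_\times=u\times u'$; once this is done, the remaining sign comes entirely from the external cap formula, and the rest of the diagram-chase is strictly analogous to the proof of Proposition~\ref{prp_symmetry}.
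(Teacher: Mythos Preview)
Your approach is essentially the same as the paper's: both compare the diagonal of the product with the product of the diagonals via the switching map $S$, and obtain the sign by combining a Thom-class comparison with the external cap-product formula. The difference lies only in how the total sign $(-1)^{n'(n+|a|)}$ is split between these two ingredients.

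In the convention the paper uses (stated explicitly in the proof and used consistently elsewhere, e.g.\ in the sign squares in the proof of Proposition~\ref{prp_pullback_shriek}), the cap-product formula is
\[
(u\times u')\frown(\alpha\times\alpha')=(-1)^{n'|\alpha|}(u\frown\alpha)\times(u'\frown\alpha'),
\]
and the Thom-class comparison reads $S^\ast u_\times=(-1)^{nn'}(u\times u')$, not $S^\ast u_\times=u\times u'$. Your normal-bundle argument for the latter is where the slip occurs: $S$ has degree $(-1)^{nn'}$ on the ambient space $(N\times N)\times(N'\times N')$ while restricting to a map of degree $+1$ between the diagonals (both identified with $N\times N'$), so it must act by $(-1)^{nn'}$ on the \emph{induced} normal-bundle orientation. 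The observation that $dS$ is the identity on a chosen normal complement does not settle this, because the normal orientation is determined by the ambient and submanifold orientations rather than being intrinsic. Since your two deviations from the paper's convention cancel, the final formula is correct; only the intermediate justification of ``$S^\ast u_\times=u\times u'$ with no extra sign'' and the exponent in the cap formula need to be adjusted.
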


\begin{proof}
Let $U\subset N\times N$ be a tubular neighborhood of $\Delta(N)$ and $U'\subset N'\times N'$ be a tubular neighborhood of $\Delta(N')$.
We denote the corresponding Thom classes by $u\in H_n(U,U-\Delta(N))$ and $u'\in H_{n'}(U',U'-\Delta(N'))$, respectively.
Then $U\times U'\subset(N\times N)\times(N'\times N')$ is a tubular neighborhood of $\Delta(N)\times\Delta(N')$, of which the Thom class is $u\times u'$.
Note that, for the map
\[
	(S^\ast)^{-1}\colon H^\ast(U\times U',U\times U'-(\Delta(N)\times\Delta(N')))\to H^\ast(S(U\times U'),S(U\times U')-(\Delta(N\times N'))),
\]
$(-1)^{nn'}(S^\ast)^{-1}(u\times u')$ is the Thom class of the tubular neighborhood $S(U\times U')\subset (N\times N')^{\times 2}$ of $\Delta(N\times N')$.
By the commutative diagram as in the proof of Proposition \ref{prp_symmetry} and the formula
\[
(u\times u')\frown(\alpha\times\alpha')=(-1)^{n'|\alpha|}(u\frown\alpha)\times(u'\frown\alpha'),
\]
we obtain the desired formulas.
\end{proof}

Let $P$ be a smooth connected closed oriented manifold, and $h\colon P\to M$.
Then we have the following commutative diagram:
\[
\xymatrix{
	P \ar[r]^-{i} \ar[d]_-{h}
		& \tilde{P}_{f\circ h,g\circ h} \ar[d]_-{\tilde{h}}
		& \Hoeq(f\circ h,g\circ h) \ar[l] \ar[d]^-{\tilde{h}} \\
	M \ar[r]^-{i} \ar[dr]_-{(f\times g)\circ\Delta}
		& \tilde{M}_{f,g} \ar[d]
		& \Hoeq(f,g) \ar[l] \ar[d] \\
	& N\times N
		& N, \ar[l]^-{\Delta} 
}
\]
where $\tilde{h}(x,\ell_1,\ell_2)=(h(x),\ell_1,\ell_2)$ and the right squares are pullback.

\begin{prp}
\label{prp_naturality}
The following equalities hold:
\[
	\tilde{h}_\ast\circ R(f\circ h,g\circ h)=R(f,g)\circ h_\ast,
		\qquad \varLambda(f\circ h,g\circ h)=\varLambda(f,g)\circ h_\ast.
\]
In particular, if $\dim P=\dim M=m$, the following equalities hold as well:
\[
	\tilde{h}_\ast\rho(f\circ h,g\circ h)=(\deg h)\rho(f,g),
		\qquad \lambda(f\circ h,g\circ h)=(\deg h)\lambda(f,g),
\]
where $\deg h\in\mathbb{Z}$ is defined by $h_\ast[P]=(\deg h)[M]$.
\end{prp}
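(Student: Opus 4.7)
The plan is to first prove the two commutativity statements for $R$ and $\varLambda$, and then derive the $\rho$ and $\lambda$ statements as direct corollaries by evaluating on fundamental classes. The formula $h_\ast[P]=(\deg h)[M]$ will carry the $\deg h$ factor out once commutativity is in hand.

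The main step is the identity $\tilde{h}_\ast\circ R(f\circ h,g\circ h)=R(f,g)\circ h_\ast$. This is essentially a naturality statement for the shriek map $\tilde{\Delta}^!$ of Section~\ref{shriek maps}. I would choose a tubular neighborhood $U\subset N\times N$ of $\Delta(N)$, with Thom class $u$, and form $\tilde{U}_M=\epsilon^{-1}(U)\subset\tilde{M}_{f,g}$ and $\tilde{U}_P=\epsilon^{-1}(U)\subset\tilde{P}_{f\circ h,g\circ h}$; then the diagram in the statement shows $\epsilon\circ\tilde{h}=\epsilon$, so $\tilde{h}^{-1}(\tilde{U}_M)=\tilde{U}_P$ and $\tilde{h}^\ast(\epsilon^\ast u)=\epsilon^\ast u$. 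By naturality of excision and of the cap product (in the form $\tilde{h}_\ast(\tilde{h}^\ast\beta\frown\alpha)=\beta\frown\tilde{h}_\ast\alpha$), the whole sequence of maps defining $R$ commutes with $\tilde{h}_\ast$ after precomposition with $i_\ast$, and the outer square $\tilde{h}\circ i=i\circ h$ from the given diagram plugs in $h_\ast$ on the left.

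For $\varLambda$, I would simply apply the projection $\pi_\ast\colon H_\ast(\Hoeq(f,g))\to H_\ast(N)$ to both sides of the established identity and use $\pi\circ\tilde{h}=\pi$ (from the diagram, the evaluation at $1/2$ is preserved by $\tilde{h}$), giving
\[
\varLambda(f\circ h,g\circ h)=\pi_\ast R(f\circ h,g\circ h)=\pi_\ast\tilde{h}_\ast R(f\circ h,g\circ h)=\pi_\ast R(f,g)\circ h_\ast=\varLambda(f,g)\circ h_\ast.
\]

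When $\dim P=\dim M=m$, evaluating the first identity at $[P]\in H_m(P)$ gives $\tilde{h}_\ast\rho(f\circ h,g\circ h)=R(f,g)(h_\ast[P])=(\deg h)R(f,g)[M]=(\deg h)\rho(f,g)$, and similarly for $\lambda$. The only subtle point, really the only place care is required, is verifying that the Thom class of the normal bundle of $\Delta(N)\subset N\times N$ used in the construction of both shriek maps is literally the same class (up to the compatible pullback $\epsilon^\ast$), so that no sign or normalization discrepancy appears between the two rows; this is immediate here because both pullback squares share the same lower map $\Delta\colon N\to N\times N$.
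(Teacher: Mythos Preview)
Your proposal is correct and follows essentially the same approach as the paper: the paper's proof consists solely of the commutative diagram
\[
\xymatrix{
	H_\ast(P) \ar[r] \ar[d]_-{h_\ast}
		& H_\ast(\tilde{h}^{-1}\tilde{U},\tilde{h}^{-1}\tilde{U}-\Hoeq(f\circ h,g\circ h)) \ar[r]^-{\tilde{h}^\ast\epsilon^\ast u\frown} \ar[d]
		& H_\ast(\tilde{h}^{-1}\tilde{U}) \ar[r]^-{\cong} \ar[d]
		& H_\ast(\Hoeq(f\circ h,g\circ h)) \ar[d]^-{\tilde{h}_\ast} \\
	H_\ast(M) \ar[r]
		& H_\ast(\tilde{U},\tilde{U}-\Hoeq(f,g)) \ar[r]^-{\epsilon^\ast u\frown}
		& H_\ast(\tilde{U}) \ar[r]^-{\cong}
		& H_\ast(\Hoeq(f,g)),
}
\]
which is exactly what you describe---same tubular neighborhood $U$ and Thom class $u$ for both rows, commutativity of the middle square via the cap-product naturality $\tilde{h}_\ast(\tilde{h}^\ast\beta\frown\alpha)=\beta\frown\tilde{h}_\ast\alpha$, and the outer identification $\tilde{h}\circ i=i\circ h$. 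Your explicit derivation of the $\varLambda$, $\rho$, $\lambda$ statements from the $R$ identity is left implicit in the paper but is entirely straightforward, as you note.
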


\begin{proof}
These equalities follow from the commutative diagram
\[
\xymatrix{
	H_\ast(P) \ar[r] \ar[d]_-{h_\ast}
		& H_\ast(\tilde{h}^{-1}\tilde{U},\tilde{h}^{-1}\tilde{U}-\Hoeq(f\circ h,g\circ h)) \ar[r]^-{\tilde{h}^\ast\epsilon^\ast u\frown} \ar[d]
		& H_\ast(\tilde{h}^{-1}\tilde{U}) \ar[r]^-{\cong} \ar[d]
		& H_\ast(\Hoeq(f\circ h,g\circ h)) \ar[d]^-{\tilde{h}_\ast} \\
	H_\ast(M) \ar[r]
		& H_\ast(\tilde{U},\tilde{U}-\Hoeq(g,f)) \ar[r]^-{\epsilon^\ast u\frown}
		& H_\ast(\tilde{U}) \ar[r]^-{\cong}
		& H_\ast(\Hoeq(f,g)). \\
}
\]
\end{proof}

\begin{rem}
Recall \cite[Corollary 6.6]{MR1853657}.
By Proposition \ref{prp_naturality}, if $a\in H_i(M)$ and $i>n$, we have
\[
	\varLambda(f,f)a=\varLambda(\id,\id)(f_\ast a)=0.
\]
But $R(f,f)a\ne0$ in general.
In Section \ref{example1}, we study such examples.
\end{rem}

We have another pullback square describing $\Hoeq(f,g)$:
\[
\xymatrix{
	N^I \ar[d]_-{(\epsilon_0,\epsilon_1)}
		& \Hoeq(f,g) \ar[l]_-{F} \ar[d] \\
	N\times N
		& M, \ar[l]_-{(f,g)}
}
\]
where $\epsilon_i\colon N^I\to N$ is the evaluation at $i\in I$.
We denote the inclusion to the constant paths by $i\colon N\to N^I$.
One might think this diagram also defines a Reidemeister trace type invariant.
Actually, it essentially coincides with $\rho(f,g)$ as follows.

\begin{thm}
\label{thm_transpose}
For the shriek map $F^!\circ i_\ast\colon H_n(N)\to H_{m-n}(\Hoeq(f,g))$, the following equality holds:
\[
	F^!i_\ast[N]=(-1)^{mn}\rho(f,g).
\]
\end{thm}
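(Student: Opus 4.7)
The plan is to reduce Theorem \ref{thm_transpose} to a single application of Proposition \ref{prp_pullback_shriek} to the diagram $M \xrightarrow{(f,g)} N \times N \xleftarrow{\Delta} N$. Taking $X_1 = M$, $\delta_1 = (f,g)$, $X_2 = N$, $\delta_2 = \Delta$, $X_3 = N \times N$, we have $d_1 = m$, $d_2 = n$, $d_3 = 2n$, and the proposition's sign is $(-1)^{(d_3-d_1)(d_3-d_2)} = (-1)^{n(2n-m)} = (-1)^{mn}$, since $2n^2$ is even.

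The first task is to identify the ingredients of Proposition \ref{prp_pullback_shriek} with those of the theorem. Writing a path in $N \times N$ as a pair of paths in $N$, one checks directly that the mapping path space $\tilde X_1$ of $(f,g)$ is literally $\tilde M_{f,g}$, that $i_1 \colon M \to \tilde X_1$ is the constant-path inclusion $i$, and that the homotopy pullback $E$ is the model $\{(x,(\ell_1,\ell_2),y) : \ell_1(0)=f(x),\ \ell_2(0)=g(x),\ \ell_1(1)=y=\ell_2(1)\}$ of $\Hoeq(f,g)$ used in Section \ref{invariant traces} (the coordinate $y$ being redundant); moreover, $\tilde\delta_2 \colon E \to \tilde X_1$ becomes the inclusion $\tilde\Delta$, so $\tilde\delta_2^! = \tilde\Delta^!$. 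On the other side, the concatenation map $((\alpha_1,\alpha_2),y) \mapsto \alpha_1 \cdot \bar\alpha_2$ provides a homotopy equivalence $\tilde X_2 \simeq N^I$ under which $i_2$ corresponds to the constant-path inclusion $i \colon N \to N^I$ and $\tilde\delta_1$ corresponds to $F$; by homotopy invariance of the Pontrjagin--Thom construction, $\tilde\delta_1^!$ is then identified with $F^!$.

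Having set up these identifications, I apply Proposition \ref{prp_pullback_shriek} to $[N \times N] \in H_{2n}(N \times N)$. By Proposition \ref{prp_shriek_property1}(1), $\delta_1^![N \times N] = [M]$ and $\delta_2^![N \times N] = [N]$, so the proposition reads
\[
\tilde\Delta^! i_\ast [M] \;=\; (-1)^{mn}\, F^! i_\ast [N].
\]
The left-hand side is $\rho(f,g)$ by definition, and multiplying both sides by $(-1)^{mn}$ yields $F^! i_\ast [N] = (-1)^{mn}\rho(f,g)$, as required.

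The main technical point is the identification $\tilde\delta_1^! = F^!$ through the concatenation equivalence $\tilde X_2 \simeq N^I$: although morally just homotopy invariance of the shriek, making it precise requires a diagram chase tracking the Thom class of $(f,g)(M) \subset N \times N$ through the relevant path-space fibrations. Everything else follows routinely from the identifications and Proposition \ref{prp_pullback_shriek}.
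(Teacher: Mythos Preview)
Your proof is correct and follows exactly the route the paper takes: the paper's own proof is the single sentence ``This theorem immediately follows from Propositions \ref{prp_shriek_property1} and \ref{prp_pullback_shriek},'' and you have simply unpacked those two citations by specifying the choice $X_1=M$, $X_2=N$, $X_3=N\times N$, identifying the homotopy-pullback data with $\tilde M_{f,g}$, $N^I$, and $\Hoeq(f,g)$, and evaluating on $[N\times N]$. The identifications you spell out (including the concatenation equivalence $\tilde X_2\simeq N^I$ and the resulting match $\tilde\delta_1^!=F^!$) are precisely what the paper leaves implicit.
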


\begin{proof}
This theorem immediately follows from Propositions \ref{prp_shriek_property1} and \ref{prp_pullback_shriek}.
\end{proof}

Suppose that $H_\ast(N)$ is a free $R$-module and $b_1,\ldots,b_r$ are a homogeneous basis of $H^\ast(N)$.
Define the dual basis $b^i$ by
\[
	\langle b^i\smile b_j,[N]\rangle=
	\begin{cases}
		1	& i=j \\
		0	& i\ne j.
	\end{cases}
\]
Then, as in \cite[Exercise 8.21 in Section VIII]{MR1335915}, the Poincar\'{e} dual of $\Delta_\ast[N]$ is
\[
	\sum_{i=1}^r(-1)^{|b_i|}b^i\times b_i\in H^n(N\times N).
\]
Using these bases, the invariant considered in Remark \ref{rem_another_Lefschetz} is explicitly computed as follows. 

\begin{cor}
\label{cor_primary_obstruction}
Under the above assumption, the Poincar\'{e} dual of the image of $\rho(f,g)\in H_{m-n}(\Hoeq(f,g))$ under the homomorphism induced from $\Hoeq(f,g)\to M$ is the class
\[
	\sum_{i=1}^r(-1)^{|b_i|}f^\ast(b^i)\smile g^\ast(b_i)\in H^n(M).
\]
In particular, if $f=g$, then this class is equal to $\chi(N)f^\ast(u)$, where $\chi(N)$ is the Euler characteristic of $N$ and $u\in H^n(N)$ is the cohomology class characterized by $\langle u,[N]\rangle=1$.
\end{cor}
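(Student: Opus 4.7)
The plan is to push $\rho(f,g)$ forward along $\tilde\Delta\colon\Hoeq(f,g)\to M$ and compute the resulting class in $H_{m-n}(M)$, after which Poincar\'e duality on $M$ delivers the cohomology class in the statement. Let $\mu=\sum_i(-1)^{|b_i|}b^i\times b_i\in H^n(N\times N)$, which by the formula recalled just before the statement equals the Poincar\'e dual of $\Delta_\ast[N]$. The crux is the identity
\[
	\tilde\Delta_\ast\tilde\Delta^!a=(f,g)^\ast\mu\frown a\qquad(a\in H_\ast(M)).
\]
Granting this with $a=[M]$, Poincar\'e duality on $M$ identifies $\tilde\Delta_\ast\rho(f,g)\in H_{m-n}(M)$ with the cohomology class $(f,g)^\ast\mu=\sum_i(-1)^{|b_i|}f^\ast(b^i)\smile g^\ast(b_i)\in H^n(M)$, which gives the first assertion.

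The identity above is the main technical step. Unwinding Definition \ref{dfn_shriek}, one has $\tilde\Delta^!a=\tilde q_\ast\bigl((\id\times(f,g))^\ast u\frown\tilde a\bigr)$, where $u$ is the Thom class of the tubular neighborhood of $\Delta'(N)\subset D^r\times(N\times N)$, $\tilde a$ is the excision-image of $g_r\times a$ in $H_\ast(\tilde U,\tilde U-\tilde\Delta'(\Hoeq(f,g)))$, and $\tilde q\colon\tilde U\to\Hoeq(f,g)$ is the tubular projection. By the characterizing property of $u$ stated in the Remark after Definition \ref{dfn_shriek}, $u$ is the restriction of the external product $\tau_r\times\mu$, where $\tau_r\in H^r(D^r,S^{r-1})$ is the generator Kronecker-dual to $g_r$. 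Since $\tilde\Delta_\ast\tilde q_\ast$ factors as the inclusion $\tilde U\hookrightarrow D^r\times M$ followed by the projection $p\colon D^r\times M\to M$, naturality of the cap product together with the cross-product formulas $(\tau_r\times\mu)\frown(g_r\times a)=1\times(\mu\frown a)$ and $p_\ast(1\times b)=b$ (with Dold's sign convention absorbed into the Thom class orientation) yields the identity.

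For the specialization $f=g$, observe $(f,f)=\Delta_N\circ f$ as maps $M\to N\times N$, hence $(f,f)^\ast\mu=f^\ast\Delta_N^\ast\mu=f^\ast\bigl(\sum_i(-1)^{|b_i|}b^i\smile b_i\bigr)$. By the definition of the dual basis,
\[
	\Bigl\langle\sum_i(-1)^{|b_i|}b^i\smile b_i,\,[N]\Bigr\rangle=\sum_i(-1)^{|b_i|}=\sum_i(-1)^i\rank_R H^i(N)=\chi(N).
\]
Since $H^n(N)$ is a free $R$-module of rank one generated by $u$ with $\langle u,[N]\rangle=1$, this forces $\Delta_N^\ast\mu=\chi(N)u$ and therefore $(f,f)^\ast\mu=\chi(N)f^\ast u$, completing the proof.
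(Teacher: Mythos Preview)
Your proof is correct, and it takes a genuinely different route from the paper's. The paper deduces the corollary from Theorem~\ref{thm_transpose}, which identifies $\rho(f,g)$ (up to the sign $(-1)^{mn}$) with $F^!i_\ast[N]$ for the \emph{other} homotopy pullback square---the one with $(f,g)\colon M\to N\times N$ on the base and $(\epsilon_0,\epsilon_1)\colon N^I\to N\times N$ as the fibration. Proposition~\ref{prp_shriek_property1}(2) then pushes this down to $(f,g)^!\Delta_\ast[N]\in H_{m-n}(M)$, and the standard Poincar\'e-duality description of the shriek $(f,g)^!$ yields the formula. In contrast, you stay entirely within the original square and prove the identity $\tilde\Delta_\ast\tilde\Delta^!a=(f,g)^\ast\mu\frown a$ directly from the definition of the shriek via Thom classes. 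Your argument is more self-contained: it avoids Theorem~\ref{thm_transpose}, which in turn rests on the rather involved Proposition~\ref{prp_pullback_shriek}. The paper's route, on the other hand, exhibits the corollary as an application of the transpose relation and keeps the corollary's proof to two diagrams. One small comment on exposition: your claim that the Thom class $u$ is the restriction of $\tau_r\times\mu$ is correct, but the justification ``by the characterizing property in the Remark'' is a little quick---the cleanest way to see it is that the normal bundle of $\Delta'(N)\subset D^r\times N\times N$ splits as $\mathbb{R}^r$ times the normal bundle of the diagonal, so its Thom class is $\tau_r\times u_0$ with $u_0$ the diagonal Thom class, whose image in $H^n(N\times N)$ is precisely $\mu$.
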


\begin{proof}
This follows from Theorem \ref{thm_transpose} and the following commutative diagrams:
\[
\xymatrix{
	H_n(N) \ar[r] \ar[d]
		& H_{m-n}(\Hoeq(f,g)) \ar[d] \\
	H_n(N\times N) \ar[r]
		& H_{m-n}(M),
}
\xymatrix{
	H_n(N\times N) \ar[rr]
		& & H_{m-n}(M) \\
	H^n(N\times N) \ar[rr]^-{(-1)^{mn}f^\ast\smile g^\ast} \ar[u]^-{\frown[N]\times[N]}
		& & H^{m-n}(M). \ar[u]_-{\frown[M]}
}
\]
\end{proof}

\begin{rem}
For the self coincidence of $f$, the class $\chi(N)f^\ast(u)$ is nothing but the primary obstruction in \cite[Proposition 2.11]{MR2147735}.
\end{rem}

%%%%%%%%%%%%%%%%%%%
%%   Section 7   %%
%%%%%%%%%%%%%%%%%%%

\section{Reidemeister map on Thom spectra}
\label{Thom spectra}

In this section, we study the Reidemeister map on Thom spectra and relation with the works by Koschorke \cite{MR2270573} and Ponto \cite[Section 4]{MR3463529}.

Let us recall the Thom spectra associated to vector bundles.
For a vector bundle $\xi$ on $X$, the \textit{Thom spectrum} is the suspension spectrum $\Sigma^\infty(D(\xi)/S(\xi))$ of the quotient space $D(\xi)/S(\xi)$, where $D(\xi)$ and $S(\xi)$ are the disk bundle and the sphere bundle of $\xi$, respectively.
We denote the Thom spectrum of $\xi$ by $X^{\xi}$.
Note that taking the direct product $\mathbb{R}^r\times\xi$ (i.e. the Whitney sum with the trivial bundle) corresponds to the suspension:
\[
	X^{\mathbb{R}^r\times\xi}\cong\Sigma^rX^\xi.
\]
From this, for a virtual vector bundle $\xi$ over $X$ such that the Whitney sum of $\xi$ with the trivial bundle of sufficiently large rank is a genuine vector bundle, the Thom spectrum is also defined as
\[
	X^\xi:=\Sigma^{-r}X^{\mathbb{R}^r\times\xi}
\]
for sufficiently large $r$.
In particular, if $X$ is a closed manifold, the Thom spectrum $X^{-TX}$ of the stable normal bundle $-TX$ is known to be the Spanier--Whitehead dual of $\Sigma^\infty(X_+)$ and is a ring spectrum.
For this, we only use the fact that $X^{-TX}$ admits the unit map $\mathbb{S}\to X^{-TX}$ from the sphere spectrum $\mathbb{S}=\Sigma^\infty S^0$ of which the Hurewicz image is the fundamental class under the Thom isomorphism of $-TX$ if $X$ is orientable (in the coefficient $\mathbb{Z}$).

Most of the arguments we have done can be extended to related Thom spectra.
For example, consider the pullback square as in Section \ref{shriek maps}:
\[
\xymatrix{
	E_1 \ar[r]^{\tilde{\delta}} \ar[d]_-{\pi_1}
		& E_2 \ar[d]^-{\pi_2} \\
	X_1 \ar[r]_-{\delta}
		& X_2,
}
\]
where $\pi_1$ and $\pi_2$ are fibrations.
Taking an embedding $X_1\to D^r$, we have the embeddings $\delta'\colon X_1\to D^{r}\times X_2$ and $\tilde{\delta}'\colon E_1\to D^{r}\times E_2$, and tubular neighborhoods $U\subset D^{r}\times X_2$ of $X_1$ and $\tilde{U}\subset D^{r}\times E_2$ of $E_1$.
Let $\xi$ be a vector bundle on $X_2$ and $\nu$ a normal bundle of $X_1\subset D^r\times X_2$.
Then we obtain the \textit{Pontrjagin--Thom maps}:
\begin{align*}
	\delta^!\colon X_2^\xi\to X_1^{\xi+\nu},
		\qquad\tilde{\delta}^!\colon E_2^{\pi_2^\ast\xi}\to E_1^{\pi_2^\ast(\xi+\nu)}
\end{align*}
by the following commutative diagrams:
\begin{align*}
\xymatrix{
	D^r\times D(\xi) \ar[d] \ar[dr]
		& \\
	\Sigma^r(D(\xi)/S(\xi)) \ar[r]
		& (D^r\times D(\xi))/((D^r\times D(\xi)-q^{-1}(U))\cup D^r\times S(\xi)),\\
	D^r\times D(\pi_2^\ast\xi) \ar[d] \ar[dr]
		& \\
	\Sigma^r(D(\pi_2^\ast\xi)/S(\pi_2^\ast\xi)) \ar[r]
		& (D^r\times D(\pi_2^\ast\xi))/((D^r\times D(\pi_2^\ast\xi)-q^{-1}(\tilde{U}))\cup D^r\times S(\pi_2^\ast\xi)),
}
\end{align*}
where $q$ denotes the projections $D^r\times D(\xi)\to D^r\times X_2$ and $D^r\times D(\pi_2^\ast\xi)\to D^r\times E_2$.
This construction also extended to the case when $\xi$ is a virtual bundle.

Let $f,g\colon M\to N$ be continuous maps between smooth connected closed manifolds and $m=\dim M$, $n=\dim N$.
Consider the diagram
\[
\xymatrix{
	M \ar[r]^-{i} \ar[dr]_-{(f\times g)\circ\Delta}
		& \tilde{M}_{f,g} \ar[d]^-{\epsilon}
		& \Hoeq(f,g) \ar[l]_-{\tilde{\Delta}} \ar[d]^-{\pi} \\
	& N\times N
		& N, \ar[l]^-{\Delta}
}
\]
as in Section \ref{invariant traces}.

\begin{dfn}
The \textit{Reidemeister map}
\[
	R(f,g)\colon M^{-TM}\to\Hoeq(f,g)^{TN-TM}
\]
is defined to be the composite
\[
	M^{-TM}\xrightarrow{i_\ast}\tilde{M}_{f,g}^{-p^\ast TM}\xrightarrow{\tilde{\Delta}^!}\Hoeq(f,g)^{\pi^\ast TN-\tilde{\Delta}^\ast p^\ast TM}=:\Hoeq(f,g)^{TN-TM}
\]
where $p\colon\tilde{M}_{f,g}\to M$ is the canonical projection.
The composite with the unit map
\[
	\mathbb{S}\to M^{-TM}\xrightarrow{R(f,g)}\Hoeq(f,g)^{TN-TM}
\]
defines an element $\rho^\pi(f,g)\in\pi_0(\Hoeq(f,g)^{TN-TM})$ of the stable homotopy group.
We call it the \textit{(homotopical) Reidemeister trace}.
\end{dfn}

By construction, the Reidemeister map $R(f,g)$ coincides with the map $M_+\to\Hoeq(f,g)^{TN}$ given by Ponto \cite[Section 4]{MR3463529} after taking the Thom spectrum of $-TM$.
Moreover, the induced homomorphism
\[
H_i(M)\cong H_{i-m}(M^{-TM})\xrightarrow{R(f,g)_\ast}H_{i-m}(\Hoeq(f,g)^{TN-TM})\cong H_{i-n}(\Hoeq(f,g))
\]
coincides with the Reidemeister trace defined in Section \ref{invariant traces}.

The following proposition immediately follows from the property of the unit map $\mathbb{S}\to M^{-TM}$.

\begin{prp}
If $M$ and $N$ are oriented, the Hurewicz image of the homotopical Reidemeister trace $\rho^\pi(f,g)$ is just the ``homological'' Reidemeister trace $\rho(f,g)\in H_{m-n}(\Hoeq(f,g))$ under the appropriate Thom isomorphisms.
\end{prp}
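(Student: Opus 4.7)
The plan is to unwind the definition of $\rho^\pi(f,g)$ and invoke naturality of the Hurewicz homomorphism. By construction, $\rho^\pi(f,g)$ is the composite of the unit $\eta_M\colon\mathbb{S}\to M^{-TM}$ with the spectrum-level Reidemeister map $R(f,g)\colon M^{-TM}\to\Hoeq(f,g)^{TN-TM}$. Since the Hurewicz homomorphism $\pi_0\to H_0$ is natural with respect to maps of spectra, the Hurewicz image of $\rho^\pi(f,g)$ in $H_0(\Hoeq(f,g)^{TN-TM})$ equals $R(f,g)_\ast$ applied to the Hurewicz image of $\eta_M$.

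First I would identify the Hurewicz image of $\eta_M$: the unit $\eta_M$ is the stable Pontrjagin--Thom collapse associated with any embedding $M\hookrightarrow D^r$ (for large $r$), and under the Thom isomorphism $H_0(M^{-TM})\cong H_m(M)$ its Hurewicz image is the fundamental class $[M]$. This is precisely the property of the unit map recorded just above the definition of $R(f,g)$.

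Next I would verify the compatibility square
\[
\xymatrix{
H_m(M) \ar[r]^-{R(f,g)} \ar[d]_-{\cong}
	& H_{m-n}(\Hoeq(f,g)) \ar[d]^-{\cong} \\
H_0(M^{-TM}) \ar[r]_-{R(f,g)_\ast}
	& H_0(\Hoeq(f,g)^{TN-TM}),
}
\]
with vertical Thom isomorphisms, where the top arrow is the homological Reidemeister homomorphism $\tilde{\Delta}^!$ of Section \ref{invariant traces} and the bottom arrow is induced by the spectrum-level map. This is essentially tautological: the spectrum-level Pontrjagin--Thom construction of Section \ref{Thom spectra} is built from exactly the same geometric data (embedding into $D^r$, tubular neighborhood, collapse onto the Thom space of the normal bundle) as the homological shriek map of Definition \ref{dfn_shriek}, and under the Thom isomorphism the cap product with the Thom class of the normal bundle on the space side becomes the canonical identification $H_\ast(X^\xi)\cong H_{\ast-\dim\xi}(X)$ on the spectrum side. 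Combining the two steps yields that the Hurewicz image of $\rho^\pi(f,g)$ corresponds under the appropriate Thom isomorphism to $R(f,g)[M]=\rho(f,g)$, as claimed.

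The only nontrivial point is the commutativity of the square above. No new geometric input is required; it is a diagram chase through the defining collapse maps of Section \ref{Thom spectra}, with signs controlled by Dold's convention as declared in the introduction, and the factor coming from the virtual rank $n-m$ of the bundle $TN-TM$ is absorbed into the Thom isomorphism for virtual bundles.
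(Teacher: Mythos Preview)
Your proposal is correct and matches the paper's approach: the paper states that the proposition ``immediately follows from the property of the unit map $\mathbb{S}\to M^{-TM}$'' (namely that its Hurewicz image is $[M]$ under the Thom isomorphism), together with the observation recorded just before the proposition that the homomorphism induced by the spectrum-level $R(f,g)$ on homology agrees with the homological Reidemeister homomorphism of Section~\ref{invariant traces}. You have simply spelled out these two ingredients and the naturality of the Hurewicz map that glues them together.
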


Koschorke \cite{MR2270573} defined the homotopy invariant $\tilde{\omega}(f,g)$ as an element of the bordism group $\Omega_{m-n}(\Hoeq(f,g);TN-TM)$ introduced by Hatcher--Quinn \cite{MR0353322}.
Suppose that $\Eq(f,g)\subset M$ is an $(m-n)$-dimensional closed submanifold and decomposed as the disjoint union of path components:
\[
	\Eq(f,g)=L_1\sqcup\cdots\sqcup L_k.
\]
Under the isomorphism $\Omega_{m-n}(\Hoeq(f,g);TN-TM)\cong\pi_0(\Hoeq(f,g)^{TN-TM})$, Koschorke's invariant $\tilde{\omega}(f,g)$ corresponds to the sum of the composites
\[
	\mathbb{S}\to L_j^{-TL_j}\xrightarrow{\iota_\ast}\Hoeq(f,g)^{TN-TM}
\]
for $j=1,\ldots,k$, where $\iota_\ast$ is the map induced from the inclusion $\iota\colon\Eq(f,g)\to\Hoeq(f,g)$ and the canonical fiberwise stable map $S(-TL_j)\to S(\iota^\ast(TN-TM))$ over $L_j$.

\begin{rem}
Koschorke assumed that $(f,g)\colon M\to N\times N$ is transverse to the diagonal subset.
Under this assumption, the coincidence index of each orientable $L_j$ must be $\pm1$.
Our assumption is slightly weaker than Koschorke's but the definition of $\tilde{\omega}(f,g)$ extends as above.
\end{rem}

\begin{thm}
\[
	\tilde{\omega}(f,g)=\rho^\pi(f,g).
\]
\end{thm}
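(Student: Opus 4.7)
The plan is to reduce $\rho^\pi(f,g)$ to a sum of local contributions indexed by the components $L_j$ of $\Eq(f,g)$, and then to identify each local contribution with the corresponding summand of $\tilde{\omega}(f,g)$. After a small transverse deformation -- permissible because both $\rho^\pi$ and $\tilde{\omega}$ are homotopy invariants -- we may assume that $(f,g)\colon M\to N\times N$ is transverse to $\Delta$, so that $\Eq(f,g)=L_1\sqcup\cdots\sqcup L_k$ is a disjoint union of smooth closed $(m-n)$-submanifolds.

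The first step is a spectrum-level analogue of Proposition~\ref{prp_trace_additivity}. Choose pairwise disjoint tubular neighborhoods $V_j\subset M$ of the $L_j$ and a tubular neighborhood $U$ of $\Delta(N)$ in $N\times N$, arranged so that $(f\times g)\circ\Delta$ carries $M\setminus\bigsqcup_j V_j$ into $(N\times N)\setminus U$. Then the Pontrjagin--Thom collapse used to define $\tilde{\Delta}^!$ is null off $\bigsqcup_j V_j$, so the composite $\mathbb{S}\to M^{-TM}\xrightarrow{R(f,g)}\Hoeq(f,g)^{TN-TM}$ factors through a wedge of local Thom collapses and splits as a sum $\sum_j\rho^\pi(f,g)_{L_j}$, where each summand has the form
\begin{align*}
\mathbb{S}\longrightarrow V_j^{-TM|_{V_j}}\longrightarrow\Hoeq(f,g)^{TN-TM}.
\end{align*}

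The second step is to identify each local piece with Koschorke's summand. Transversality of $(f,g)$ along $L_j$ produces a canonical bundle isomorphism between the normal bundle $\nu_j$ of $L_j\subset M$ and the pullback $\iota^\ast\pi^\ast TN$, hence a stable identification $TL_j\oplus\iota^\ast\pi^\ast TN\cong TM|_{L_j}$, i.e.\ $-TL_j\cong\iota^\ast(TN-TM)$ stably. Unwinding the construction of $\tilde{\Delta}^!$ from Section~\ref{shriek maps}, the restriction of $R(f,g)$ to the local summand at $L_j$ is precisely the composite of the unit $\mathbb{S}\to L_j^{-TL_j}$ with the map of Thom spectra induced by this stable trivialization and by the inclusion $\iota\colon L_j\to\Hoeq(f,g)$. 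This is Koschorke's defining composite $\mathbb{S}\to L_j^{-TL_j}\xrightarrow{\iota_\ast}\Hoeq(f,g)^{TN-TM}$. Summing over $j$ and invoking the first step yields the equality $\rho^\pi(f,g)=\tilde{\omega}(f,g)$.

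The main obstacle is to check that the bundle identifications and orientation data used in the two constructions agree on the nose over each $L_j$, not merely up to sign. Concretely, one has to verify that the Thom class of the embedding $X_1\hookrightarrow D^r\times X_2$ appearing in Definition~\ref{dfn_shriek}, when pulled back through $\tilde{\Delta}'$ and restricted to a fiber of $\nu_j$, coincides with the Thom class produced by the transversality isomorphism $\nu_j\cong\iota^\ast\pi^\ast TN$. This is a fiberwise calculation at a point of each $L_j$, parallel to the homological computation carried out in the proof of Theorem~\ref{thm_local_trace}, but promoted to the level of stable maps. Once this local matching is in place, additivity finishes the argument.
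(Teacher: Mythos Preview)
Your proposal is correct and follows essentially the same route as the paper: both arguments factor the composite $\mathbb{S}\to M^{-TM}\xrightarrow{R(f,g)}\Hoeq(f,g)^{TN-TM}$ through the wedge $\bigvee_j L_j^{-TL_j}$ via Pontrjagin--Thom collapse onto tubular neighborhoods of the $L_j$, and then identify each local summand with Koschorke's. The paper's proof is considerably terser --- it simply asserts the existence of the relevant homotopy commutative diagram of spectra --- whereas you spell out the additivity step, the transversality-induced bundle identification $-TL_j\cong\iota^\ast(TN-TM)$, and the orientation-matching issue; your preliminary transverse deformation is also an extra step (the paper works under the slightly weaker hypothesis that $\Eq(f,g)$ is merely a closed submanifold), but it is harmless since both invariants are homotopy invariants.
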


\begin{proof}
Considering the appropriate tubular neighborhood of $\Eq(f,g)$, one can see that there is a homotopy commutative diagram of spectra
\[
\xymatrix{
	\mathbb{S} \ar[r] \ar[dr]
		& M^{-TM} \ar[d] \ar[dr]^-{R(f,g)}
		& \\
		& \bigvee_{j=1}^kL_j^{-TL_j} \ar[r]
		& \Hoeq(f,g)^{TN-TM},
}
\]
where each $M^{-TM}\to L_j^{-TL_j}$ is induced from the inclusion $L_j\to M$.
Thus we obtain the equality $\tilde{\omega}(f,g)=\rho^\pi(f,g)$.
\end{proof}

If the codimension between $M$ and $N$ is small, it is known that the homotopical Reidemeister trace is a very strong invariant.
The following is proved by Koschorke \cite[Theorem 1.10]{MR2270573}

\begin{thm}[Koschorke]
Suppose $m<2n-2$.
Then $f$ and $g$ can be deformed into the coincidence-free maps if and only if $\rho^\pi(f,g)$ vanishes.
\end{thm}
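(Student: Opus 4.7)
First, I would handle the easy direction: if $f,g$ can be deformed into a coincidence-free pair $(f',g')$, then $\Eq(f',g') = \emptyset$, so the Hatcher--Quinn sum defining $\tilde{\omega}(f',g')$ is empty and hence zero. Combining this with the preceding theorem $\rho^\pi(f,g) = \tilde{\omega}(f,g)$ and the homotopy invariance of $\rho^\pi$ forces $\rho^\pi(f,g) = 0$.

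For the converse, my plan is to first apply Thom transversality to perturb $(f,g)\colon M \to N \times N$ to be transverse to $\Delta(N)$, making $L := \Eq(f,g)$ a closed $(m-n)$-dimensional submanifold of $M$. Under the identification of $\pi_0(\Hoeq(f,g)^{TN-TM})$ with the Hatcher--Quinn bordism group, the vanishing hypothesis produces a compact framed bordism $W^{m-n+1}$ with $\partial W = L$ together with a map $\Psi\colon W \to \Hoeq(f,g)$ extending $\iota$.

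I then intend to realize this abstract null-bordism as an actual homotopy of $(f,g)$. The metastable hypothesis is exactly what is needed here: $m < 2n-2$ rephrases as $2 \dim W = 2(m-n+1) < m$, so by general position the composite $W \xrightarrow{\Psi} \Hoeq(f,g) \to M$ deforms, rel $\partial W$, to a smooth embedding $W \hookrightarrow M \times [0,1]$ restricting to the inclusion $L \hookrightarrow M \times \{0\}$. The $N^I$-coordinate of $\Psi$ should then prescribe, along this embedded bordism, how to interpolate $f$ and $g$ through a concordance $(F,G)\colon M \times [0,1] \to N \times N$ whose coincidence locus is exactly the embedded $W$, so that it starts at $L$ at time $0$ and terminates empty at time $1$.

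The main obstacle will be this final realization step: turning a framed null-bordism into an actual deformation of the pair of maps. This is the heart of Koschorke's argument in \cite[Theorem 1.10]{MR2270573}, and the metastable range hypothesis appears essential both for embedding $W$ and for consistently spreading the path data across it; outside this range $\rho^\pi$ remains defined and homotopy invariant but no longer captures the full geometric obstruction.
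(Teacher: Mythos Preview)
The paper does not give its own proof of this theorem; it is simply stated and attributed to Koschorke with the citation \cite[Theorem 1.10]{MR2270573}. So there is no ``paper's proof'' to compare against.

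Your outline is the expected Hatcher--Quinn/Koschorke argument: the easy direction is immediate from homotopy invariance, and for the converse you interpret the vanishing of $\rho^\pi(f,g)=\tilde\omega(f,g)$ as a null-bordism in $\Omega_{m-n}(\Hoeq(f,g);TN-TM)$, then in the metastable range embed that null-bordism into $M\times[0,1]$ and convert it into a concordance of $(f,g)$ to a coincidence-free pair. That is the right shape. One caution: the realization step is not merely a general-position embedding of $W$---you must also match the normal data (the $TN-TM$ framing) along the embedded bordism and then extend the pair $(F,G)$ off a tubular neighborhood of $W$ to all of $M\times[0,1]$. This is precisely the content of the Hatcher--Quinn disjunction theorem \cite{MR0353322}, and Koschorke's proof quotes it rather than redoing the Whitney-trick style argument. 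Your sketch correctly identifies where the metastable hypothesis enters, but if you were writing this out in full you would either need to invoke Hatcher--Quinn directly or supply those normal-bundle and extension arguments.
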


%%%%%%%%%%%%%%%%%%%
%%   Section 9   %%
%%%%%%%%%%%%%%%%%%%
\section{Various Nielsen numbers}
\label{Nielsen numbers}

In this section we generalize Nielsen numbers.
Some of them have already been studied by Koschorke \cite{MR2991953}.

Let $f,g\colon M\to N$ be continuous maps between smooth connected closed manifolds and $m=\dim M$, $n=\dim N$.
For an element $\alpha\in\pi_0(\Hoeq(f,g))$, we denote the corresponding path component by $\Hoeq(f,g)_\alpha$.
Under this notation, we have the following obvious decomposition:
\[
	\Hoeq(f,g)^{TN-TM}\simeq\bigvee_{\alpha\in\pi_0(\Hoeq(f,g))}\Hoeq(f,g)_\alpha^{TN-TM}
\]
For a homology theory $h_\ast$ given by a ring spectrum, we denote the homology class given by the unit map by $1\in h_0(\mathbb{S})$.

\begin{dfn}
The $h$-\textit{Reidemeister trace} $\rho^h(f,g)$ and $h$-\textit{Lefschetz trace} $\lambda^h(f,g)$ are defined as follows:
\[
	\rho^h(f,g)=R(f,g)_\ast1\in h_0(\Hoeq(f,g)^{TN-TM}),\qquad \lambda^h(f,g)=\pi_\ast\rho^h(f,g)\in h_0(N^{-TN}),
\]
where $R(f,g)$ is the Reidemeister map and $\pi\colon\Hoeq(f,g)\to N$ is the canonical projection.
\end{dfn}

\begin{dfn}
We denote the number of elements $\alpha\in\pi_0(f,g)$ such that the image of $\rho^h(f,g)$ under the projection onto $h_0(\Hoeq(f,g)_\alpha^{TN-TM})$ is nontrivial by $\tilde{N}^h(f,g)$.
Similarly, we denote the number of elements $\alpha\in\pi_0(f,g)$ such that the image of $\rho^h(f,g)$ under the projection onto $h_0(\Hoeq(f,g)_\alpha^{TN-TM})$ has nontrivial image in $h_0(M^{f^\ast TN-TM})$ by $N^h(f,g)$.
We call these numbers by \textit{Nielsen numbers}.
\end{dfn}

In particular, we will denote the Reidemeister trace in the integral homology by $\rho^{\mathbb{Z}}(f,g)$.
The following lemma is easy to verify.

\begin{lem}
\begin{enumerate}
\item
The following inequalities hold:
\[
	\tilde{N}^h(f,g)\ge N^h(f,g),\qquad \tilde{N}^\pi(f,g)\ge\tilde{N}^h(f,g), \qquad N^\pi(f,g)\ge N^h(f,g).
\]
\item
If $f(x)\ne g(x)$ for any $x\in M$, then $\tilde{N}^h(f,g)=N^h(f,g)=0$.
\item
If $M$ and $N$ are orientable in the integral homology and $m=n$, then $\tilde{N}^\pi(f,g)=\tilde{N}^{\mathbb{Z}}(f,g)$.
\end{enumerate}
\end{lem}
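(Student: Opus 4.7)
The inequality $\tilde N^h(f,g)\ge N^h(f,g)$ is immediate: if the $\alpha$-component of $\rho^h(f,g)$ has nontrivial image in $h_0(M^{f^\ast TN-TM})$, then it must itself be nontrivial. For the inequalities comparing the $\pi$- and $h$-invariants, I would use that $\rho^h(f,g)$ is the Hurewicz image of $\rho^\pi(f,g)$ (since the unit $\mathbb{S}\to h$ sends $1\in\pi_0(\mathbb{S})$ to $1\in h_0(\mathbb{S})$ and $R(f,g)$ is a map of spectra). Naturality of the Hurewicz map, applied to both the $\alpha$-component projection and to the map induced by $\Hoeq(f,g)\to M$, then shows that vanishing of a component of $\rho^\pi$ forces vanishing of the corresponding component of $\rho^h$; contraposition gives $\tilde N^\pi\ge\tilde N^h$ and $N^\pi\ge N^h$.

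\textbf{Part (2).} Assuming $f(x)\ne g(x)$ for every $x\in M$, I claim $R(f,g)$ itself is null-homotopic as a map of spectra, which immediately yields $\rho^h(f,g)=0$ and hence $\tilde N^h(f,g)=N^h(f,g)=0$. Indeed, $\epsilon\circ i=(f,g)\colon M\to N\times N$ avoids the diagonal, so I can choose a tubular neighborhood $U$ of $\Delta(N)$ disjoint from $(f,g)(M)$; then $\tilde U=\epsilon^{-1}(U)\subset\tilde M_{f,g}$ is disjoint from $i(M)$. In the Pontrjagin--Thom construction of $\tilde\Delta^!$ from Section \ref{Thom spectra}, everything outside $\tilde U$ is collapsed to the basepoint, so $\tilde\Delta^!\circ i_\ast$ is already null at the space level.

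\textbf{Part (3).} By Part (1) we have $\tilde N^\pi\ge\tilde N^{\mathbb{Z}}$, so only the reverse inequality needs argument. The plan is to prove that for each path-component $\Hoeq(f,g)_\alpha$, the Hurewicz map $\pi_0\bigl(\Hoeq(f,g)_\alpha^{TN-TM}\bigr)\to H_0\bigl(\Hoeq(f,g)_\alpha^{TN-TM}\bigr)$ is an isomorphism; once this is known, the $\alpha$-component of $\rho^\pi(f,g)$ vanishes exactly when the $\alpha$-component of $\rho^{\mathbb{Z}}(f,g)$ does. To verify the isomorphism, I would choose $d\ge 2$ large enough that $\xi_d:=\mathbb{R}^d\oplus(TN-TM)$ is a genuine rank-$d$ vector bundle on $\Hoeq(f,g)_\alpha$, which is $\mathbb{Z}$-orientable because $m=n$ and both $M,N$ are oriented. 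The integral Thom isomorphism then gives $\tilde H_i\bigl(\Hoeq(f,g)_\alpha^{\xi_d}\bigr)\cong H_{i-d}(\Hoeq(f,g)_\alpha)$, which vanishes for $i<d$, and a van Kampen argument using $d\ge 2$ shows simple connectivity; hence the Thom space is $(d-1)$-connected. Classical Hurewicz then yields $\pi_d\cong H_d$ for this Thom space, which is exactly the desired isomorphism after desuspending by $d$. The main obstacle I anticipate is this connectivity-and-Hurewicz bookkeeping in Part (3); the rest is a formal consequence of naturality.
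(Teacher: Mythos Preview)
Your argument is correct and supplies exactly the details the paper omits (it merely asserts the lemma is ``easy to verify''). One small addendum for Part~(3): passing from the unstable Hurewicz isomorphism $\pi_d\cong H_d$ of the Thom space to the spectrum-level statement $\pi_0\cong H_0$ also uses Freudenthal to identify $\pi_d$ with $\pi_d^{s}$ for a $(d-1)$-connected space when $d\ge 2$, but this is routine.
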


As an example, let us consider the maps between spheres.
For $h=\pi$, see \cite{MR2270573}.

\begin{ex}
Let $m>n\ge2$ and take maps $f,g\colon S^m\to S^n$.
For the fibration
\[
\Omega S^n\to\Hoeq(f,g)\to S^m,
\]
the fiber inclusion $\Omega S^n\to\Hoeq(f,g)$ is $(m-2)$-connected.
Since $TS^m$ and $TS^n$ are stably trivial, we have the isomorphisms
\[
\pi_0(\Hoeq(f,g)^{TS^n-TS^m})
	\cong\pi_{m-n}(\Sigma^\infty\Hoeq(f,g))
	\cong\pi_{m-n}(\Sigma^\infty\Omega S^n)
	\cong\bigoplus_{k\ge1}\pi_{m-1-k(n-1)}(\mathbb{S}).
\]
Koschorke \cite[Theorem 1.14]{MR2270573} proved that, under the composite of these isomorphisms, the element $\rho^\pi(f,g)=\tilde{\omega}(f,g)$ corresponds to
\[
\varGamma(f)-(-1)^{k(n-1)}\varGamma(g),
\]
where $\varGamma\colon\pi_m(S^n)\to\oplus_{k\ge1}\pi_{m-1-k(n-1)}(\mathbb{S})$ is the stabilized James--Hopf invariant.
Note that all the nontrivial homomorphisms on homology induced from the James--Hopf invariant are just the degree and the usual Hopf invariant.
If $n$ is odd or $m\ne2n-1$, $\rho^{\mathbb{Z}}(f,g)$ is trivial and hence $\tilde{N}^{\mathbb{Z}}(f,g)=N^{\mathbb{Z}}(f,g)=0$.
If $n$ is even and $m=2n-1$, $\rho^{\mathbb{Z}}(f,g)\in H_{n-1}(\Hoeq(f,g))\cong H_{n-1}(\Omega S^n)\cong\mathbb{Z}$ corresponds to the integer
\[
H(f)-H(g),
\]
where $H(f)$ denotes the Hopf invariant of $f$.
If $H(f)\ne H(g)$, then $\tilde{N}^{\mathbb{Z}}(f,g)=1$.
But $N^{\mathbb{Z}}(f,g)$ is trivial for any $f,g\colon S^{2n-1}\to S^n$ by dimensional reason.
\end{ex}

%%%%%%%%%%%%%%%%%%%
%%   Section 8   %%
%%%%%%%%%%%%%%%%%%%

\section{Shriek map on Serre spectral sequence}
\label{shriek map on ss}

Cohen--Jones--Yan \cite{MR2039760} constructed the shriek map on Serre spectral sequence.
Though they are concentrated on the case for free loop spaces, their construction may be done for more general case.
In this section, we recall their construction.

Consider the following pullback square as in Section \ref{shriek maps}:
\[
\xymatrix{
	E_1 \ar[r]^{\tilde{\delta}} \ar[d]_-{\pi_1}
		& E_2 \ar[d]^-{\pi_2} \\
	X_1 \ar[r]_-{\delta}
		& X_2.
}
\]
We take tubular neighborhoods $U\subset X_2$ of $\delta(X_1)$ and $\tilde{U}=\pi_2^{-1}U\subset E_2$ of $\tilde{\delta}(E_1)$.
We do not need to assume that the homology local system associated to $E_2\to X_2$ is trivial.
We denote the codimension by $d=d_2-d_1$ and the fiber of $\pi_1$ and $\pi_2$ by $F$.
Then, there are increasing filtrations of singular chain complexes
\[
	\{F_pC_\ast(E_1)\}_p,
	\quad \{F_pC_\ast(E_2)\}_p,
	\quad \{F_pC_\ast(\tilde{U})\}_p,
	\quad \{F_pC_\ast(E_2,E_2-\tilde{\delta}(E_1))\}_p,
	\quad \{F_pC_\ast(\tilde{U},\tilde{U}-\tilde{\delta}(E_1))\}_p
\]
and the following associated homology Serre spectral sequences:
\begin{align*}
	\{E^r(E_1),d^r\}\colon&
		&& E^2_{p,q}=H_p(X_1;\mathcal{H}_q(F))&\Longrightarrow&H_{p+q}(E_1),\\
	\{E^r(E_2),d^r\}\colon&
		&& E^2_{p,q}=H_p(X_2;\mathcal{H}_q(F))&\Longrightarrow&H_{p+q}(E_2),\\
	\{E^r(\tilde{U}),d^r\}\colon&
		&& E^2_{p,q}=H_p(U;\mathcal{H}_q(F))&\Longrightarrow&H_{p+q}(\tilde{U}),\\
	\{E^r(E_2,E_2-\tilde{\delta}(E_1)),d^r\}\colon&
		&& E^2_{p,q}=H_p(X_2,X_2-\delta(X_1);\mathcal{H}_q(F))&\Longrightarrow&H_{p+q}(E_2,E_2-\tilde{\delta}(E_1)),\\
	\{E^r(\tilde{U},\tilde{U}-\tilde{\delta}(E_1)),d^r\}\colon&
		&& E^2_{p,q}=H_p(U,U-\delta(X_1);\mathcal{H}_q(F))&\Longrightarrow&H_{p+q}(\tilde{U},\tilde{U}-\tilde{\delta}(E_1)),
\end{align*}
where we denote the local system associated to the fibrations $E_1\to X_1$ or $E_2\to X_2$ by $\mathcal{H}_\ast(F)$.
We omit the precise definition of the filtrations.
For details, see \cite[Section 2]{MR2039760}.
Let $u\in H^d(U,U-\delta(X_1))$ be the Thom class.
We also denote the image of $u$ under the natural homomorphism
\[
	H^d(U,U-\delta(X_1))\to H^d(U,U-\delta(X_1);\mathcal{H}^0(F))
\]
by $u$.
Then the cap product $\pi_2^\ast u\frown$ respects the filtration:
\[
	\pi_2^\ast u\frown\colon F_pC_\ast(\tilde{U},\tilde{U}-\tilde{\delta}(E_1))\to F_{p-d}(\tilde{U}).
\]
Thus it induces the map of spectral sequences
\[
	\varphi\colon E^r_{p,q}(\tilde{U},\tilde{U}-\tilde{\delta}(E_1))\to E^r_{p-d,q}(\tilde{U})
\]
of bidegree $(-d,0)$ in the sense that
\begin{enumerate}
\item
for $x\in E^r_{p,q}(\tilde{U},\tilde{U}-\tilde{\delta}(E_1))$, $\varphi(d^rx)=(-1)^dd^r\varphi(x)$,
\item
the following diagrams commute:
\[
\xymatrix{
	H_p(U,U-\delta(X_1);\mathcal{H}_q(F)) \ar[r]^-{u\frown} \ar[d]_-{\cong}
		& H_{p-d}(U;\mathcal{H}_q(F)) \ar[d]^-{\cong} \\
	E^2_{p,q}(\tilde{U},\tilde{U}-\tilde{\delta}(E_1)) \ar[r]_-{\varphi}
		& E^2_{p-d,q}(\tilde{U}),
}
\]
\[
\xymatrix{
	\ker d^r_{p,q}/\im d^r_{p+r,q-r+1} \ar[r]^-{\varphi} \ar[d]_-{\cong}
		& \ker d^r_{p-d,q}/\im d^r_{p+r-d,q-r+1} \ar[d]^-{\cong} \\
	E^{r+1}_{p,q}(\tilde{U},\tilde{U}-\tilde{\delta}(E_1)) \ar[r]_-{\varphi}
		& E^{r+1}_{p-d,q}(\tilde{U}),
}
\]
\[
\xymatrix{
	E^\infty_{p,q}(\tilde{U},\tilde{U}-\tilde{\delta}(E_1)) \ar[r]^-{\varphi} \ar[d]_-{\cong}
		& E^\infty_{p-d,q}(\tilde{U}) \ar[d]^-{\cong} \\
	F_pH_{p+q}(\tilde{U},\tilde{U}-\tilde{\delta}(E_1))/F_{p-1}H_{p+q}(\tilde{U},\tilde{U}-\tilde{\delta}(E_1)) \ar[r]_-{\pi_2^\ast u\frown}
		& F_{p-d}H_{p+q-d}(\tilde{U})/F_{p-d-1}H_{p+q-d}(\tilde{U}).
}
\]
\end{enumerate}

Therefore, we obtain the following map.

\begin{thm}
Under the above setting, there is a map of bidegree $(-d,0)$ between spectral sequences
\[
	\tilde{\delta}^!\colon E^r_{p,q}(E_2)\to E^r_{p-d,q}(E_1)
\]
satisfying the following properties:
\begin{enumerate}
\item
for $x\in E^r_{p,q}(E_2)$, $\tilde{\delta}^!(d^rx)=(-1)^dd^r\tilde{\delta}^!(x)$,
\item
the following diagrams commute:
\[
\xymatrix{
	H_p(X_2;\mathcal{H}_q(F)) \ar[r]^-{\delta^!} \ar[d]_-{\cong}
		& H_{p-d}(X_1;\mathcal{H}_q(F)) \ar[d]^-{\cong} \\
	E^2_{p,q}(E_2) \ar[r]_-{\tilde{\delta}^!}
		& E^2_{p-d,q}(E_1), \\
	\ker d^r_{p,q}/\im d^r_{p+r,q-r+1} \ar[r]^-{\tilde{\delta}^!} \ar[d]_-{\cong}
		& \ker d^r_{p-d,q}/\im d^r_{p+r-d,q-r+1} \ar[d]^-{\cong} \\
	E^{r+1}_{p,q}(E_2) \ar[r]_-{\tilde{\delta}^!}
		& E^{r+1}_{p-d,q}(E_1), \\
	E^\infty_{p,q}(E_2) \ar[r]^-{\tilde{\delta}^!} \ar[d]_-{\cong}
		& E^\infty_{p-d,q}(E_1) \ar[d]^-{\cong} \\
	F_pH_{p+q}(E_2)/F_{p-1}H_{p+q}(E_2) \ar[r]_-{\tilde{\delta}^!}
		& F_{p-d}H_{p+q-d}(E_1)/F_{p-d-1}H_{p+q-d}(E_1).
}
\]
\end{enumerate}
\end{thm}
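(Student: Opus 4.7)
The plan is to define $\tilde{\delta}^!$ as a composite of maps between the five Serre spectral sequences set up above, mirroring the Pontrjagin--Thom construction of the homological shriek map in Definition \ref{dfn_shriek}. Concretely, I would take $\tilde{\delta}^!$ to be the composite
\[
E^r_{p,q}(E_2) \to E^r_{p,q}(E_2, E_2 - \tilde{\delta}(E_1)) \xleftarrow{\cong} E^r_{p,q}(\tilde{U}, \tilde{U} - \tilde{\delta}(E_1)) \xrightarrow{\varphi} E^r_{p-d,q}(\tilde{U}) \xleftarrow{\cong} E^r_{p-d,q}(E_1),
\]
where $\varphi$ is the cap-product map with $\pi_2^\ast u$ already constructed above, the first arrow comes from the inclusion of pairs, the second from excision along the open inclusion $\tilde{U}\subset E_2$, and the fourth from the deformation retract $\tilde{U}\xrightarrow{\simeq}E_1$ of the normal disk bundle.

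The first step is to verify that each of the four outer maps respects the Serre filtration on singular chains and hence induces a map of spectral sequences. For the chain maps induced by the inclusion $E_2\to(E_2,E_2-\tilde{\delta}(E_1))$ and by the deformation retract onto $E_1$, this is just naturality of the Serre filtration under maps of fibrations. For the excision map, one uses the description of the Serre filtration by the dimension of the image of a simplex in the base (as in \cite[Section 2]{MR2039760}), together with the standard barycentric subdivision argument, to see that it is a filtered quasi-isomorphism and hence induces an isomorphism of spectral sequences from $E^2$ onward. Granted this, the three commutative diagrams in the statement follow by composing the corresponding diagrams at each step: on the $E^2$ page the three outer isomorphisms reduce by construction to the homology isomorphisms used to build $\delta^!$ in Definition \ref{dfn_shriek}, and then the already-established $E^2$-level commutativity of $\varphi$ recovers the $E^2$-level diagram for $\tilde{\delta}^!$; the $E^{r+1}$- and $E^\infty$-level diagrams follow automatically once each constituent map is filtered. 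The sign $(-1)^d$ in property (1), namely $\tilde{\delta}^!(d^r x)=(-1)^d d^r \tilde{\delta}^!(x)$, is contributed entirely by $\varphi$, since the other three maps are honest chain maps commuting with the singular differential.

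The main obstacle will be verifying the filtration-preserving excision: that the inclusion $(\tilde{U},\tilde{U}-\tilde{\delta}(E_1))\hookrightarrow(E_2,E_2-\tilde{\delta}(E_1))$ induces an isomorphism not just on total homology but on the full Serre spectral sequence starting from $E^2$. This should follow by applying excision fiberwise over each simplex of the base, using that $\tilde{U}=\pi_2^{-1}(U)$ is a saturated open subset of $E_2$, but the bookkeeping with barycentric subdivision — tracking which subdivided simplices remain in a given filtration level — is the delicate point. Once this is settled, the rest of the theorem is purely formal, and the commutativity of the three squares in the statement is a direct consequence of the commutativity established for $\varphi$ and of the naturality of the Serre spectral sequence under filtered maps.
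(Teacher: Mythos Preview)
Your proposal is correct and follows essentially the same route as the paper: the paper defines $\tilde{\delta}^!$ by exactly the composite you wrote down and then declares properties (1) and (2) ``obvious by definition and the properties of $\varphi$.'' Your write-up is in fact more careful than the paper's, since you explicitly flag the filtered-excision step as the only point needing work; the paper simply asserts the isomorphism $E^r_{p,q}(E_2,E_2-\tilde{\delta}(E_1))\cong E^r_{p,q}(\tilde{U},\tilde{U}-\tilde{\delta}(E_1))$ without comment.
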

\begin{proof}
The map $\tilde{\delta}^!$ is defined by the composite
\[
	E^r_{p,q}(E_2)
		\to E^r_{p,q}(E_2,E_2-\tilde{\delta}(E_1))
		\cong E^r_{p,q}(\tilde{U},\tilde{U}-\tilde{\delta}(E_1))
		\xrightarrow{\varphi} E^r_{p,q}(\tilde{U})
		\cong E^r_{p,q}(E_1).
\]
Then the properties (1) and (2) are obvious by definition and the properties of $\varphi$.
\end{proof}

%%%%%%%%%%%%%%%%%%%%
%%   Section 10   %%
%%%%%%%%%%%%%%%%%%%%
\section{Self-coincidence of $S^1$-bundles}
\label{example1}

The self-coincidence problems of the projections of $S^1$-bundles on $\mathbb{C}P^n$ are studied by Dold--Gon\c{c}alves \cite{MR2147735}.
Since the difference of the dimensions of the source and the target is $1$ and $\mathbb{C}P^2$ is simply connected, the coincidence Lefschetz trace does not work.
In this section, using the shriek map between the Serre spectral sequences recalled in Section \ref{shriek map on ss}, we investigate what can we say about our generalized Reidemeister trace.
The coefficient ring is $\mathbb{Z}$ in this section.

Let $x\in H^2(\mathbb{C}P^n)$ and $[\mathbb{C}P^n]$ be the generators satisfying the equality
\[
	\langle x^n,[\mathbb{C}P^n]\rangle=1.
\]
We follow the notation by Dold--Gon\c{c}alves, namely,
\[
	p^k_n\colon E^k_n\to\mathbb{C}P^n
\]
is the principal $S^1$-bundle classified by $kx\in H^2(\mathbb{C}P^n)\cong [\mathbb{C}P^n,\mathbb{C}P^\infty]$.

Let us compute the Reidemeister trace using the Serre spectral sequence.
We denote the homotopy fiber of the maps $E^k_n\to\mathbb{C}P^n\times\mathbb{C}P^n$ and $\Hoeq(p^k_n,p^k_n)\to\mathbb{C}P^n$ by $F$, which is path connected.
Considering the commutative diagram
\[
\xymatrix{
	E^k_n \ar@{=}[r] \ar[d]_-{p^k_n}
		& E^k_n \ar[r]^-{p^k_n} \ar[d]_-{(p^k_n,p^k_n)}
		& \mathbb{C}P^n \ar[d]^-{\Delta} \\
	\mathbb{C}P^n \ar[r]_-{\Delta}
		& \mathbb{C}P^n\times\mathbb{C}P^n \ar@{=}[r]
		& \mathbb{C}P^n\times\mathbb{C}P^n,
}
\]
one can compute $H_1(F)$ and the $E^2$-term of the homology Serre spectral of $E^k_n\to\mathbb{C}P^n\times\mathbb{C}P^n$ as follows:
\begin{align*}
H_1(F)&=\mathbb{Z}\{a,b\},\\
E^2_{p,q}&\cong
	\begin{cases}
		H^{4n-p}(\mathbb{C}P^n\times\mathbb{C}P^n)								& q=0 \\
		H^{4n-p}(\mathbb{C}P^n\times\mathbb{C}P^n)\otimes\mathbb{Z}\{a,b\}		& q=1
	\end{cases}
\end{align*}
such that $d^2((x^{n-1}\times x^n)\otimes1)=(x^n\times x^n)\otimes(ka+b)$ and $d^2((x^n\times x^{n-1})\otimes1)=-(x^n\times x^n)\otimes b$, where $a$ is the image of the fundamental class of $S^1$.
Note that the cycle in $E^2_{2n,1}$ corresponding to the fundamental class is computed as
\[
	(x^n\times 1+x^{n-1}\times x+\cdots+1\times x^n)\otimes a.
\]

For the homotopy equalizer $\Hoeq(p^k_n,p^k_n)\to\mathbb{C}P^n$, we have the $E^2$-term as follows:
\[
	E^2_{p,q}(\Hoeq(p^k_n,p^k_n))\cong H_p(\mathbb{C}P^n;H_q(F))\cong
	\begin{cases}
		H^{2n-p}(\mathbb{C}P^n)								& q=0 \\
		H^{2n-p}(\mathbb{C}P^n)\otimes\mathbb{Z}\{a,b\}		& q=1.
	\end{cases}
\]
Then, the under the shriek map $\tilde{\Delta}^!\colon E^2(E^k_n\to\mathbb{C}P^n\times\mathbb{C}P^n)\to E^2(\Hoeq(p^k_n,p^k_n)\to\mathbb{C}P^n)$, the fundamental class is mapped to
\[
	(n+1)x^n\otimes a\in E^2_{0,1}.
\]
Comparing the spectral sequences through the map
\[
\xymatrix{
	\Hoeq(p^k_n,p^k_n) \ar[r]^-{\tilde{\Delta}} \ar[d]
		& E^k_n \ar[d] \\
	\mathbb{C}P^n \ar[r]_-{\Delta}
		& \mathbb{C}P^n\times\mathbb{C}P^n,
}
\]
we have
\[
	d^2x^{n-1}=kx^n\otimes a\in E^2_{0,1}(\Hoeq(p^k_n,p^k_n))
\]
Thus we have
\[
	H_1(\Hoeq(p^k_n,p^k_n))\cong\mathbb{Z}\oplus\mathbb{Z}/k\mathbb{Z}
\]
and the coincidence Reidemeister trace $\rho^{\mathbb{Z}}(p^k_n,p^k_n)$ lives in the torsion part $\mathbb{Z}/k\mathbb{Z}$.
More precisely, we obtain the following result.

\begin{thm}
The following conditions are equivalent:
\begin{enumerate}
\item
$n+1$ is not divisible by $k$,
\item
$\rho^{\mathbb{Z}}(p^k_n,p^k_n)\ne0$,
\item
$\tilde{N}^\mathbb{Z}(p^k_n,p^k_n)=1$.
\item
$N^\mathbb{Z}(p^k_n,p^k_n)=1$.
\end{enumerate}
\end{thm}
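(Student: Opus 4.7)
The plan is to build on the spectral-sequence calculation already carried out in the excerpt; what remains is to pin down the precise class represented by $\rho^{\mathbb{Z}}(p^k_n,p^k_n)$ in $H_1(\Hoeq(p^k_n,p^k_n))\cong\mathbb{Z}\oplus\mathbb{Z}/k\mathbb{Z}$ and its image in $H_1(E^k_n)$. First, I would observe that $\mathbb{C}P^n$ is simply connected and the fiber $F$ is path connected, so $\Hoeq(p^k_n,p^k_n)$ is path connected; hence $\tilde{N}^{\mathbb{Z}}$ and $N^{\mathbb{Z}}$ each take values in $\{0,1\}$, determined by whether the respective images of $\rho^{\mathbb{Z}}$ vanish. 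This reduces the four-way equivalence to two computations.

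For (1)$\Leftrightarrow$(2)$\Leftrightarrow$(3), I would read off from the calculation already done in the excerpt that the Reidemeister trace is represented by $(n+1)x^n\otimes a\in E^2_{0,1}$ of the Serre spectral sequence of $\Hoeq(p^k_n,p^k_n)\to\mathbb{C}P^n$, and that the only differential affecting the spot $(0,1)$ is the computed $d^2 x^{n-1}=kx^n\otimes a$. Hence $E^\infty_{0,1}\cong\mathbb{Z}\{a,b\}/\langle ka\rangle\cong\mathbb{Z}/k\mathbb{Z}\oplus\mathbb{Z}$ and, since $E^\infty_{1,0}=H_1(\mathbb{C}P^n)=0$, this is the whole of $H_1(\Hoeq(p^k_n,p^k_n))$. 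Under this identification $\rho^{\mathbb{Z}}$ corresponds to $((n+1)\bmod k,\,0)$, which is nonzero exactly when $k\nmid n+1$.

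For (1)$\Leftrightarrow$(4), I would compute the image of $\rho^{\mathbb{Z}}$ under the projection $\tilde{\Delta}_\ast\colon H_1(\Hoeq(p^k_n,p^k_n))\to H_1(E^k_n)$, which under the Thom isomorphism is the map $h_0(\Hoeq(p^k_n,p^k_n)^{TN-TM})\to h_0(M^{f^\ast TN-TM})$ appearing in the definition of $N^{\mathbb{Z}}$. The composition $p^k_n\circ\tilde{\Delta}\colon\Hoeq(p^k_n,p^k_n)\to\mathbb{C}P^n$ is a composition of fibrations with strict fiber $S^1\times\Omega\mathbb{C}P^n\simeq F$, so it fits as a map of fibrations over $\mathbb{C}P^n$ with $p^k_n\colon E^k_n\to\mathbb{C}P^n$, inducing on fibers the projection $S^1\times\Omega\mathbb{C}P^n\to S^1$. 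On $H_1$ this projection sends $a\mapsto a$ and $b\mapsto 0$. By naturality of the Serre spectral sequence (Section \ref{shriek map on ss}), the image of $(n+1)x^n\otimes a$ in $E^\infty_{0,1}(E^k_n\to\mathbb{C}P^n)\cong H_1(E^k_n)\cong\mathbb{Z}/k\mathbb{Z}$ equals $(n+1)\bmod k$, nonzero iff $k\nmid n+1$.

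The main obstacle is carefully identifying the induced map on fibers $F\to S^1$. Specifically, one needs the generator $a\in H_1(F)$ introduced in the excerpt to correspond under $\tilde{\Delta}_\ast$ to the generator of $H_1$ of the $S^1$-fiber of $p^k_n$, while $b$ dies. This amounts to making explicit the splitting $F\simeq S^1\times\Omega\mathbb{C}P^n$ coming from the factorization of $p^k_n\circ\tilde{\Delta}$ as $\tilde{\Delta}$ (a fibration with fiber $\Omega N=\Omega\mathbb{C}P^n$) followed by $p^k_n$ (a fibration with fiber $S^1$); once this identification is in hand, the remaining steps reduce to the spectral-sequence bookkeeping already developed in Section \ref{shriek map on ss}.
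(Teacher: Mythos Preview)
Your treatment of (1)$\Leftrightarrow$(2)$\Leftrightarrow$(3) matches the paper's: both rely on the spectral-sequence computation already recorded before the theorem and on the path-connectedness of $\Hoeq(p^k_n,p^k_n)$.

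For (4), your approach is correct but differs from the paper's. You compute the image of $\rho^{\mathbb{Z}}$ in $H_1(E^k_n)$ by comparing the Serre spectral sequences of the fibrations $\Hoeq(p^k_n,p^k_n)\to\mathbb{C}P^n$ and $E^k_n\to\mathbb{C}P^n$ along the projection $\tilde{\Delta}$, identifying the fiber map with the first projection $S^1\times\Omega\mathbb{C}P^n\to S^1$ (so $a\mapsto a$, $b\mapsto 0$). The paper instead invokes Corollary~\ref{cor_primary_obstruction}: for a self-coincidence the Poincar\'e dual of the image of $\rho(f,f)$ in $H_\ast(M)$ equals $\chi(N)f^\ast(u)$, here $(n+1)(p^k_n)^\ast(x^n)\in H^{2n}(E^k_n)$; a one-line cohomology Serre spectral sequence computation for $p^k_n$ then gives $H^{2n}(E^k_n)\cong\mathbb{Z}/k\mathbb{Z}$ with $(p^k_n)^\ast(x^n)$ a generator. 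The paper's route is shorter because the work was already packaged in Corollary~\ref{cor_primary_obstruction}; your route stays entirely within the homology spectral sequence framework of Section~\ref{shriek map on ss} and avoids invoking Poincar\'e duality, at the cost of having to verify the fiber identification you flag as the main obstacle. One small point: the base map of the Hoeq-fibration used in the excerpt is $\pi$ (evaluation at $1/2$) rather than $p^k_n\circ\tilde{\Delta}$, but these are fiber-homotopic over $\mathbb{C}P^n$, so the spectral sequences agree and your comparison goes through.
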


\begin{proof}
The equivalence of (1) and (2) follows from the above argument.
The equivalence with the conditions (3) follows from the fact that $\Hoeq(p^k_n,p^k_n)$ is path-connected.
For the condition (4), it is sufficient to compute the cohomology Serre spectral sequence for $p^k_n\colon E^k_n\to\mathbb{C}P^n$ and to apply Corollary \ref{cor_primary_obstruction}.
\end{proof}

This result is the same as \cite[Theorem 1.3]{MR2147735}.
The nontriviality of the self-coincidence of the Hopf fibration \cite[Theorem 1.1]{MR2147735} cannot be observed from the coincidence Reidemeister trace in ordinary homology.

\begin{rem}
In view of Theorem \ref{thm_transpose}, one can apply the Serre spectral sequence of $\Hoeq(f,g)\to M$ to compute the Reidemeister trace.
To do this, we need to extend the shriek maps in Section \ref{shriek map on ss} for general maps $X_1\to X_2$.
But this spectral sequence is not good by the following reason.
In the Serre spectral sequence of $\Delta\colon N\to N\times N$, the permanent cycle corresponding to the fundamental class lives in $E^r_{n,0}$.
Then even if its image of the shriek map vanishes, which lives in $E^r_{m-n,0}$, one cannot say the Reidemeister trace is trivial due to the extension problem.
In contrast to this, in the spectral sequence we considered, the corresponding image lives in $E^r_{p,q}$ for $p\le0$.
Thus we do not need to solve the extension problem to observe the triviality of the Reidemeister trace.
\end{rem}

%%%%%%%%%%%%%%%%%%%%
%%   Section 11   %%
%%%%%%%%%%%%%%%%%%%%
\section{Relation with loop coproduct}
\label{loop coproduct}

Let $M$ be a smooth connected closed oriented manifold of dimension $m$.
We consider the self-coincidence of the identity map $\id_M$.
The corresponding homotopy equalizer is the free loop space $\Hoeq(\id_M,\id_M)=LM$.
Consider the following pullback diagram:
\[
\xymatrix{
	LM \ar[d]
		& LM\times_MLM \ar[l]_-{\tilde{\Delta}} \ar[d] \\
	M\times M
		& M, \ar[l]^-{\Delta}
}
\]
where $LM\to M\times M$ is the evaluation at $0$ and $1/2$ and $LM\times_MLM\to LM$ is the concatenation of loops.
Then, we have the shriek map
\[
	\tilde{\Delta}^!\colon H_\ast(LM)\to H_{\ast-m}(LM\times_MLM).
\]
The \textit{loop coproduct} \cite{MR2079373}
\[
	\Psi\colon H_\ast(LM)\to H_{\ast-m}(LM\times LM)
\]
is defined to be the composite of $\tilde{\Delta}^!$ and the homomorphism induced by the canonical inclusion $LM\times_MLM\to LM\times LM$.

Let us consider the commutative diagram
\[
\xymatrix{
	LM \ar[d]
		& LM\times_MLM \ar[l] \ar[r] \ar[d]
		& LM\times LM \ar[dl] \\
	M^{[0,1]}
		& LM \ar[l],
}
\]
where the vertical $LM\to M^{[0,1]}$ is given by
\[
	\gamma\mapsto(t\to\gamma(2t)),
\]
the horizontal $LM\times_MLM\to LM$ is the concatenation of loops, the vertical $LM\times_MLM\to LM$ and $LM\times LM\to LM$ are the projections onto the first factor, and the horizontal $LM\to M^{[0,1]}$ is the inclusion.
This induces the following diagram on homology:
\[
\xymatrix{
	H_m(LM) \ar[r] \ar[d]
		& H_0(LM\times_MLM) \ar[r] \ar[d]
		& H_0(LM\times LM) \ar[dl] \\
	H_m(M) \ar[r]_-{R(\id,\id)}
		& H_0(LM).
}
\]
For the section $s\colon M\to LM$ such that $s(x)$ is the constant loop at $x$, consider the class $s_\ast[M]\in H_m(LM)$.
The loop coproduct $\Psi s_\ast[M]$ is computed by Tamanoi \cite[Theorem 3.1]{MR2577666} as
\[
	\Psi s_\ast[M]=\chi(M)[\ast]\times[\ast],
\]
where $\chi(M)$ is the Euler characteristic of $M$ and $\ast$ is the $0$-cycle of the constant loop at the base point.
Note that the map $H_m(LM)\to H_m(M)$ is the same as the homomorphism induced from the evaluation $LM\to M$ at $0$.
Then the image of $s_\ast[M]$ under this map is $[M]$.
Then we have
\[
	\rho(\id_M,\id_M)=R(\id_M,\id_M)[M]=\chi(M)[\ast].
\]

\bibliographystyle{alpha}
\bibliography{2016reidemeister}
\end{document}